\newtheorem{theo}{Theorem}[section]
\newtheorem{prop}[theo]{Proposition}
\newtheorem{claim}[theo]{Claim}
\newtheorem{lemm}[theo]{Lemma}
\newtheorem{coro}[theo]{Corollary}
\newtheorem{question}[theo]{Question}
\newtheorem{rema}[theo]{Remark}
\newtheorem{Defi}[theo]{Definition}
\title{Compact K\"{a}hler  manifolds with no projective specialization}
\author{Claire Voisin\footnote{The author is supported by the ERC Synergy Grant HyperK (Grant agreement No. 854361).}}
\date{}
\newfont{\gothic}{eufb10}
\begin{document}
\maketitle
\setcounter{section}{-1}
\begin{flushright} {\it Pour Fabrizio,  \`{a} l'occasion de son 70\`{e}me anniversaire}
\end{flushright}

\begin{abstract} We  show the existence of a compact K\"{a}hler manifold which does not fit in a proper flat  family over an irreducible base with one  projective (possibly singular)   fiber.  We also give a topological version of this statement. This strengthens our earlier counterexamples to the Kodaira algebraic approximation problem.
 \end{abstract}
\section{Introduction}
In the paper \cite{kodaira}, Kodaira proved that any compact K\"ahler surface admits ``algebraic approximations", that is, it becomes projective by an arbitrarily small deformation. This result, obtained as a consequence of the classification of surfaces, has been reproved in \cite{buchdahl} by an infinitesimal method. A similar statement has been obtained recently by Lin \cite{lin} for threefolds. However, starting from dimension 4, we constructed  in \cite{voisin}  compact K\"{a}hler manifolds not homeomorphic to, and in particular not deformation equivalent  to, a complex projective manifold. In the paper \cite{voisinjdg}, we also exhibited examples of  compact K\"{a}hler manifolds,  no  smooth bimeromorphic models of which is  homeomorphic to a complex  projective manifold.

The manifolds $X$ constructed in \cite{voisin} thus have the property that there is no smooth proper holomorphic map
$\mathcal{X}\rightarrow B$, where $B$ is a connected analytic space, with two points $b,\,b'\in B$  such that
$\mathcal{X}_b\cong X$ and the fiber $\mathcal{X}_{b'}$ is projective. This does not say anything however on the singular specializations $\mathcal{X}_{b'}$ of such proper flat  holomorphic maps   $\mathcal{X}\rightarrow B$ with one  fiber  $\mathcal{X}_b$  isomorphic to $X$.
\begin{Defi}    A compact complex manifold has a projective specialization if there exists a proper flat  holomorphic  map  $\mathcal{X}\rightarrow B$, where $B$ is an irreducible    analytic space, with one fiber $\mathcal{X}_b$ isomorphic to $X$ and one projective  fiber $\mathcal{X}_0$.
\end{Defi}
The  main result of this paper is the  following theorem, which addresses a question  asked to us by A. Kazhymurat.

\begin{theo}\label{theomain}   There exists a compact K\"ahler manifold $X$ with no projective specialization. Such manifolds exist starting from dimension $5$.
\end{theo}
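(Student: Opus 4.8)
The plan is to produce $X$ as a refinement of the rigid examples of \cite{voisin}: a suitable iterated blow-up $X=\widehat{T}$ of a complex torus $T=V/\Gamma$ carrying an endomorphism $\phi\in\mathrm{End}(\Gamma)$, chosen so that two properties hold. First, the weight-one Hodge structure $H^{1}(X,\mathbb{Q})=H^{1}(T,\mathbb{Q})$ together with the induced action of $\phi$ admits \emph{no} rational polarization, for any of the complex structures making $\phi$ holomorphic, while $X$ itself remains Kähler (so only a real, non-rational, polarization exists). Second, $\phi$ is \emph{cohomologically rigid}: it is recovered from the cup-product ring of $X$, hence is invariant under any homeomorphism and extends as a flat endomorphism over any smooth family whose fibre is $X$. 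The first property rests on the arithmetic of the eigenvalues of $\phi$ exactly as in \cite{voisin}; the second is what lets $\phi$ survive degenerations. I then argue by contradiction from a projective specialization $\pi\colon\mathcal{X}\to B$, with $B$ irreducible, $\mathcal{X}_{b}\cong X$ and $\mathcal{X}_{0}$ projective.

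First I would linearize the base. Since $B$ is irreducible, the locus $B^{\mathrm{sm}}\subset B$ over which $\pi$ is smooth is open, dense and connected and contains $b$; all its fibres are diffeomorphic to $X$, so by cohomological rigidity $\phi$ is a flat section of $\mathrm{End}(R^{1}\pi_{*}\mathbb{Q})$ over $B^{\mathrm{sm}}$. Choosing a general analytic arc $\gamma\colon\Delta\to B$ with $\gamma(0)=0$ and $\gamma(\Delta^{*})\subset B^{\mathrm{sm}}$ and pulling back, I obtain a proper flat family over the disc, smooth with Kähler fibres over $\Delta^{*}$ and with projective central fibre $\mathcal{X}_{0}$. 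Here I must use that every fibre over $B^{\mathrm{sm}}$ is again a blow-up of a $\phi$-torus, hence Kähler, which is the global form of the rigidity of \cite{voisin}.

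Next comes the Hodge-theoretic core. Over $\Delta^{*}$ the subspace $F^{1}H^{1}(X_{t})=H^{1,0}(T_{t})$ is a $\phi$-stable subspace of the fixed space $\Gamma_{\mathbb{C}}$ with $F^{1}\oplus\overline{F^{1}}=\Gamma_{\mathbb{C}}$; as $\phi$ has distinct eigenvalues there are only finitely many such subspaces, so the period map is constant and the monodromy $T$ on $H^{1}$ is finite. After a base change $t\mapsto t^{N}$, which does not affect the central fibre, the monodromy becomes trivial while $\mathcal{X}_{0}$ stays projective. The local invariant cycle theorem, in the Kähler degeneration setting with projective special fibre, then yields a surjective morphism of rational mixed Hodge structures $H^{1}(\mathcal{X}_{0},\mathbb{Q})\twoheadrightarrow H^{1}(X_{t},\mathbb{Q})=\mathcal{H}_{0}$, where $\mathcal{H}_{0}$ is the fixed non-polarizable $\phi$-Hodge structure. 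Because $\mathcal{X}_{0}$ is projective, $\mathrm{Gr}^{W}_{1}H^{1}(\mathcal{X}_{0},\mathbb{Q})$ is polarizable, and a polarizable Hodge structure surjecting onto $\mathcal{H}_{0}$ forces $\mathcal{H}_{0}$ to be rationally polarizable, contradicting the first property. This contradiction proves the theorem; the bound $\dim\ge 5$ reflects the extra blow-up needed to secure cohomological rigidity against singular central fibres, and the topological version follows because $\phi$ and its non-polarizability are read off from the integral cohomology ring alone.

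I expect the main obstacle to be twofold and to lie precisely where the Kähler, non-projective nature of the general fibre interferes with classical Hodge theory: proving the \emph{global} cohomological rigidity that makes $\phi$ flat and forces the period map of $H^{1}$ to be constant over $B^{\mathrm{sm}}$ (so that the monodromy is finite), and establishing the \emph{local invariant cycle theorem as a morphism of mixed Hodge structures} for a Kähler degeneration whose only assumed algebraicity is that of the singular central fibre. Once these are in place, the arithmetic non-polarizability of \cite{voisin} closes the argument.
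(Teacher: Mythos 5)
Your proposal follows the same broad skeleton as the paper (rigid torus with endomorphism, non-polarizability of the weight-one Hodge structure, restriction to a disc through the projective fiber, monodromy control, then degeneration Hodge theory at the projective central fiber), but it has genuine gaps at exactly the points where the argument is delicate. The first is your claim that every fiber over $B^{\mathrm{sm}}$ --- in particular every fiber on your arc, arbitrarily close to the singular projective fiber $\mathcal{X}_0$ --- is again a blow-up of a $\phi$-torus, hence K\"ahler. Kodaira's stability theorems and the rigidity statements of \cite{voisin} concern \emph{small} deformations of the given fiber: they make the locus of such fibers open in $B^{\mathrm{sm}}$, but not closed, and K\"ahlerness is an open, not closed, condition. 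Since the fibers you need are precisely those near $0$ and far from $b$, nothing in your argument guarantees they carry a Hodge structure at all, so your family over $\Delta^*$ has no Hodge theory to feed into the later steps. This is the central difficulty of the problem, and the paper circumvents it with Proposition \ref{theoeasycons} and Corollary \ref{coromono}: degeneration of the Fr\"olicher spectral sequence propagates from the one K\"ahler fiber to an analytic-Zariski dense open set $V$, Hodge structures exist off a closed real-analytic subset, and rigidity plus analytic continuation of the holomorphically varying Hodge filtration (together with surjectivity of $\pi_1(V)\rightarrow\pi_1(B^0)$) shows the monodromy preserves the Hodge structure transported from $X$ --- without ever asserting that the nearby fibers are K\"ahler.

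The second gap is the inference ``period map constant $\Rightarrow$ monodromy on $H^1$ finite.'' Constancy of the period map only places $T$ in the group of lattice automorphisms commuting with $\psi$ and preserving $F^1$; since $F^1$ is a sum of eigenspaces of $\psi$, this group contains the unit group of an order in the quartic field $\mathbb{Q}[\psi]$, which is \emph{infinite} by Dirichlet's unit theorem --- precisely because the Hodge structure is not polarizable, you cannot bound its automorphisms. Hence the base change $t\mapsto t^N$ killing the monodromy is unjustified as stated. The correct mechanism, which is the paper's, is: after semistable reduction $T$ is unipotent by Steenbrink's Theorem \ref{theounip} (no projectivity of the general fibers needed), and $T$ is semisimple --- in your setting because $T$ commutes with $\psi$ and so lies in the field $\mathbb{Q}[\psi]$, in the paper's because of the semisimplicity hypothesis applied using the monodromy-invariant class $r^*H$ with nonzero top self-intersection --- so $T=\mathrm{Id}$. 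Finally, you invoke a local invariant cycle theorem, as a morphism of mixed Hodge structures, for a degeneration whose only algebraicity is the projectivity of the central fiber; as you yourself flag, no such theorem is available in this generality. The paper avoids it entirely: once $T=\mathrm{Id}$, hence $N=0$, Steenbrink's weight spectral sequence (which needs only the central fiber to be projective) satisfies ${}_W E_1^{r,q}=0$ for $r\neq 0$ because of the isomorphisms $N^r:{}_WE_1^{-r,q+r}\cong{}_WE_1^{r,q-r}$, so the limit mixed Hodge structure is pure and is a subquotient of the polarizable Hodge structures on the cohomology of the intersections of components of the projective central fiber. This produces a polarizable Hodge structure on the whole cohomology algebra $H^*_{\rm lim}\cong H^*(X,\mathbb{Q})$, contradicting Theorem \ref{prorigid} directly; no invariant cycle theorem and no identification of the limit Hodge structure with the fiber's own Hodge structure is required.
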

A compact K\"ahler  manifold  $X$ satisfying this property  is constructed in Section \ref{sec2X}. Note that the result  presumably holds   in  dimension 4 as well.

We will also establish    in Section \ref{secfurther}   a variant
 of these results,   using stronger properties  of  the  cohomology algebra of the considered compact K\"ahler manifold. This    will allow us
to  strengthen the theorem above by making it into a topological statement.
We will prove indeed in Section \ref{secfurther} the following result.
\begin{theo} \label{theotop} There exists a compact K\"ahler  manifold  $Y$ satisfying  the following property: There is no proper flat  holomorphic  map  $\mathcal{X}\rightarrow B$, where $B$ is an irreducible     analytic space, with one smooth  fiber $\mathcal{X}_b$  homeomorphic  to $Y$ and one projective  fiber  $\mathcal{X}_0$.
\end{theo}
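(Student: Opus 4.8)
The plan is to argue by contradiction, reducing the statement to a one-parameter degeneration and then exploiting the fact that the projectivity of the special fibre forces the limiting Hodge structure of the nearby smooth fibres to be polarized. Suppose we are given $\mathcal{X}\to B$ with $B$ irreducible, a smooth fibre $\mathcal{X}_b$ homeomorphic to $Y$, and a projective fibre $\mathcal{X}_0$. Since $\mathcal{X}_b$ is smooth and the family is flat, the morphism is smooth along $\mathcal{X}_b$, so $b$ lies in the connected, Zariski-dense smooth locus $B^{\mathrm{sm}}\subset B$; by Ehresmann's theorem the fibre over any point of $B^{\mathrm{sm}}$ is diffeomorphic to $\mathcal{X}_b$, hence has cohomology ring isomorphic to $H^*(Y)$. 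Choosing a curve in $B$ joining a general point of $B^{\mathrm{sm}}$ to $0$, normalizing it, and restricting to a small disc, I reduce to a flat family $\mathcal{X}\to\Delta$ whose general fibre $\mathcal{X}_t$ is smooth with $H^*(\mathcal{X}_t)\cong H^*(Y)$ and whose central fibre $\mathcal{X}_0$ is projective. If $\mathcal{X}_0$ is smooth, this exhibits a projective manifold diffeomorphic to $\mathcal{X}_t\approx Y$, contradicting the fact (as for the examples of \cite{voisin}) that $Y$ is not homeomorphic to any projective manifold; so the essential case is that of a \emph{singular} projective special fibre.

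Next I would apply semistable reduction: after a finite base change $t\mapsto t^k$, which does not alter the diffeomorphism type of the general fibre, and a modification of the total space supported over $0$, which leaves $\mathcal{X}_t$ untouched, the central fibre becomes a reduced normal crossings divisor all of whose strata are smooth projective. Steenbrink's construction then produces on $H^*(\mathcal{X}_t,\mathbb{Q})$ a limit mixed Hodge structure, with monodromy weight filtration $W_\bullet$ and limiting Hodge filtration $F^\bullet_{\mathrm{lim}}$, whose graded pieces are \emph{polarized} pure Hodge structures. This is exactly the point at which the projectivity, rather than the mere K\"ahlerness, of the special fibre is used: the polarizations descend from the projective strata even though the general fibres $\mathcal{X}_t$ need not themselves be projective.

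The construction of $Y$ in Section \ref{secfurther} is arranged so that the cohomology algebra $H^*(Y)$ carries a canonical integral operator $\phi^*$, defined purely in terms of the cup-product structure, hence a topological invariant transported to every $\mathcal{X}_t$, and so that on any compact K\"ahler manifold with this cohomology ring the operator $\phi^*$ is automatically a morphism of Hodge structures. Granting this, $\phi^*$ is a flat endomorphism of the variation over $\Delta^*$ which preserves the Hodge filtration fibrewise; it therefore acts on the limit mixed Hodge structure as a morphism of mixed Hodge structures, commuting with $N=\log T_u$ and preserving both $W_\bullet$ and $F^\bullet_{\mathrm{lim}}$. On the relevant weight-graded piece, a polarized pure Hodge structure, the compatibility of $\phi^*$ with the polarization forces its eigenvalues on the holomorphic part to satisfy a reflection symmetry under complex conjugation. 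By construction the characteristic polynomial of $\phi^*$, a topological invariant and hence the same as on $H^*(Y)$, has its roots chosen to violate precisely this symmetry, which gives the required contradiction.

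The main obstacle is to show that the topologically defined operator $\phi^*$ is a morphism of Hodge structures on every K\"ahler fibre, and hence respects $F^\bullet_{\mathrm{lim}}$, since a priori a flat integral endomorphism need not preserve the Hodge decomposition. This is exactly what the strengthened properties of the cohomology algebra of $Y$ are designed to guarantee, by encoding $\phi^*$ through classes that are forced to be of Hodge type $(p,p)$ on any K\"ahler manifold homeomorphic to $Y$; verifying this, together with the precise form of the polarization constraint on the limit mixed Hodge structure built from the singular projective fibre, is the technical heart of the argument.
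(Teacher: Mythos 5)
Your reduction to a one-parameter semistable degeneration with projective central fibre, and your appeal to Steenbrink's limit mixed Hodge structure whose weight-graded pieces are polarized because the strata of the central fibre are projective, match the paper's setup. But the mechanism you then propose has a genuine gap, and it sits exactly at the point you yourself flag as the ``technical heart''. You want a topologically defined operator $\phi^*$ on $H^*(Y,\mathbb{Q})$ to act as a morphism of mixed Hodge structures on $H^*_{\rm lim}$, and you propose to justify its compatibility with $F^\bullet_{\rm lim}$ by the claim that $\phi^*$ is a morphism of Hodge structures on every K\"ahler fibre. This cannot work here: in Theorem \ref{theotop} no fibre other than the projective one is assumed K\"ahler --- the paper stresses that this is precisely the difference with Theorem \ref{theomain} --- so there is no variation of Hodge structure over the punctured disc, no fibrewise Hodge structure for $\phi^*$ to respect, and hence no way to pass to the limit filtration. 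A flat, topologically defined endomorphism has no a priori compatibility with $F^\bullet_{\rm lim}$, and nothing in the construction of $Y$ supplies one.

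What the construction of $Y$ actually supplies, and what the paper uses, is different: Proposition \ref{proajoute} states that \emph{every automorphism of the cohomology algebra} $H^*(Y,\mathbb{Q})$ is semi-simple. The automorphism to which this is applied is the monodromy operator $T$ itself: it preserves the ring structure since it is induced by diffeomorphisms, and the smooth fibres have cohomology ring isomorphic to $H^*(Y,\mathbb{Q})$. Steenbrink's Theorem \ref{theounip} says $T$ is unipotent (only reducedness of the normal crossing central fibre is needed), so semi-simplicity forces $T=\mathrm{Id}$, hence $N=0$. Then Steenbrink's isomorphisms $N^r: {_WE_1^{-r,q+r}}\cong {_WE_1^{r,q-r}}$ kill all off-diagonal weight terms, the limit mixed Hodge structure is pure, and, the strata being projective, it is a polarizable Hodge structure on the cohomology algebra $H^*_{\rm lim}\cong H^*(Y,\mathbb{Q})$ (Proposition \ref{progenD}). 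The contradiction is then with Theorem \ref{prorigid}: no Hodge structure on the cohomology algebra of $Y$ has polarizable degree-one part. So the Galois-theoretic incompatibility you invoke is indeed where the final contradiction lives, but it is reached through the non-polarizability statement for \emph{abstract} Hodge structures on the algebra, after purity of the limit has been forced by the monodromy argument --- not by transporting an operator through the degeneration. Your proposal is missing both the control of the monodromy and the resulting purity of the limit; without them the weight filtration can be non-trivial, the topologically defined subspaces of $H^1$ need not be compatible with it, and your graded-piece argument does not get started.
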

A compact K\"ahler  manifold  $Y$ satisfying this property  is constructed in Section \ref{sec2X1}.
\begin{rema}{\rm Even if the statement concerns the homeomorphism class of $Y$,  one cannot replace ``irreducible'' by ``connected" in such a statement, because, in the presence of   singular fibers, it is  not true  that the smooth fibers are all homeomorphic if $B$ is only assumed connected but is not irreducible. An explicit example illustrating this phenomenon is constructed in \cite{leeoguiso}.}
\end{rema}
As one can notice, an important  difference between Theorems \ref{theomain} and \ref{theotop} is that the latter does not assume the existence of a K\"ahler fiber in the family $\mathcal{X}$. We discuss in Section  \ref{secfurther1}  some general  consequences of the existence of one singular projective fiber.

 Compact K\"ahler  manifolds $X$ or $Y$  satisfying these  conclusions  are  obtained using a  variant of the main constructions in \cite{voisin},  \cite{voisinjdg}. We review the  constructions  in Section \ref{sec2}.
 The fact that the examples $X$   constructed in Section \ref{sec2X}, (resp. the examples  $Y$ constructed in Section \ref{sec2X1},) satisfy   the conclusion of  Theorem \ref{theomain} (resp. Theorem \ref{theotop})  follows from Theorem  \ref{theomainavechypot} (resp. Theorem \ref{theonewtop}) below. To explain this, let us first  recall the strategy of \cite{voisin}, \cite{voisinjdg} using the formalism  presented in  \cite{voisinMA}. We  introduce  the following   definition  (see \cite{voisinMA})
  \begin{Defi}\label{defiHScohalg} (1) A  Hodge structure on the cohomology algebra of a compact complex manifold $X$  is the data of a (effective)  weight $k$  Hodge structure on each cohomology group $H^k(X,\mathbb{Q})$, such that
the cup-product map
$$H^k(X,\mathbb{Q})_{}\otimes  H^l(X,\mathbb{Q})_{}\rightarrow H^{k+l}(X,\mathbb{Q})_{}$$
is a morphism of Hodge structures for each pair of integers $k,\,  l$.

(2)  The Hodge structure is said polarizable if for each $k$, the weight $k$ Hodge structure on $ H^k(X,\mathbb{Q})$ is polarizable.
\end{Defi}

 If $X$ is a compact K\"ahler manifold, its cohomology algebra carries a Hodge structure as defined in (1), since the cup-product on $X$  maps
 $H^{p,q}(X)\otimes H^{p',q'}(X)$ to $H^{p+p',q+q'}(X)$. If $X$ is a projective complex manifold, it carries a polarizable Hodge structure on its cohomology algebra as in  the above definition, where the polarization is constructed using a polarization $l=c_1(L)$ on $X$, where $L$ an ample line bundle on $X$. The non-existence of a polarizable Hodge structure on the cohomology algebra is the   criterion  used in the papers \cite{voisin}, \cite{voisinjdg} to detect manifolds not homeomorphic to projective complex manifolds. We constructed in these papers   some  compact K\"{a}hler manifolds having  the property that their cohomology algebra does not admit a polarizable  Hodge structure. The manifolds $X$ and $Y$ that we will construct satisfy this property (see Theorem \ref{prorigid}). Theorem \ref{theomain} will thus follow by contradiction  from  the following result  (see also  Theorem \ref{theomainavechypot1} for a slightly stronger version)
\begin{theo}\label{theomainavechypot} Let $X$ be a compact K\"ahler manifold. Assume

(i)  The Hodge structure on the cohomology algebra    $H^*(X,\mathbb{Q})$ is  rigid under small  deformations of $X$.

(ii) The automorphisms of the Hodge structure on the cohomology algebra $H^*(X,\mathbb{Q})$   are semi-simple.

Then, if  $X$ admits a projective specialization, the cohomology algebra  $H^*(X,\mathbb{Q})$  admits a polarizable Hodge structure.
\end{theo}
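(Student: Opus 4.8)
The plan is to reduce the projective specialization to a one-parameter degeneration with constant Hodge structure, to prove that its local monodromy is trivial after a finite base change, and to read off polarizability from the resulting limit mixed Hodge structure. Let $U\subset B$ denote the locus over which $\mathcal{X}\to B$ is smooth with K\"ahler fibre. Since $X=\mathcal{X}_b$ is smooth and K\"ahler and both conditions are open, $b\in U$; as $B$ is irreducible, $U$ is a nonempty open subset, hence dense and irreducible. Over $U$ the family carries a variation of Hodge structure on $H^*(\mathcal{X}_t,\mathbb{Q})$ compatible with cup-product, with period map $\mathcal{P}$. Hypothesis (i) means precisely that $\mathcal{P}$ is constant near $b$; since the constancy locus is a closed analytic subset with nonempty interior in the irreducible space $U$, analytic continuation shows $\mathcal{P}$ is constant on all of $U$. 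Hence the Hodge filtration is flat over $U$, all fibres over $U$ are diffeomorphic to $X$ with the same Hodge structure via parallel transport, and the monodromy $\pi_1(U,b)\to \mathrm{GL}(H^*(X,\mathbb{Q}))$ lands in the automorphism group of the Hodge structure on the cohomology algebra.

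Next I would build the degeneration. As $U$ is open and dense, $0\in\overline{U}$, and a general holomorphic disc $\Delta\to B$ with $\Delta(0)=0$ satisfies $\Delta^*\subset U$; pulling back gives $f\colon \mathcal{X}_\Delta\to\Delta$ with $\mathcal{X}_t$ smooth K\"ahler carrying the Hodge structure of $X$ for $t\ne 0$, and $\mathcal{X}_0$ projective. The local monodromy $T$ around $0$ is the image of a loop under the representation above, hence an automorphism of the Hodge structure on $H^*(X,\mathbb{Q})$; by (ii) it is semi-simple, while by the monodromy theorem it is quasi-unipotent, so $T$ has finite order. After the base change $t\mapsto t^m$ the monodromy becomes trivial, so $N=\log T=0$ and the limit mixed Hodge structure $H^k_{\lim}$ is pure of weight $k$; since $\mathcal{P}$ was constant, this pure Hodge structure is exactly the Hodge structure on $H^k(X,\mathbb{Q})$.

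It then remains to extract polarizability. Applying semistable reduction to $f$, with the modification taken projective over $\mathcal{X}_0$, I obtain a degeneration whose central fibre is a reduced normal crossing divisor all of whose strata $Y_I$ are smooth projective --- this is the only point where projectivity of $\mathcal{X}_0$ enters. Steenbrink's weight spectral sequence then realizes $\mathrm{Gr}^W_\bullet H^k_{\lim}$ as a subquotient of $\bigoplus_I H^*(Y_I,\mathbb{Q})$, a direct sum of polarizable Hodge structures; as the category of polarizable Hodge structures is abelian and semi-simple, hence closed under subquotients, $\mathrm{Gr}^W_\bullet H^k_{\lim}$ is polarizable. Because $N=0$ gives $H^k_{\lim}=\mathrm{Gr}^W_k H^k_{\lim}$, each $H^k(X,\mathbb{Q})=H^k_{\lim}$ is a polarizable Hodge structure, which by Definition~\ref{defiHScohalg}(2) is exactly the assertion that the Hodge structure on the cohomology algebra $H^*(X,\mathbb{Q})$ is polarizable.

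I expect the main obstacle to be the vanishing $N=0$, which is where both hypotheses are indispensable: (i) forces, via analytic continuation, the period map to be globally constant on $U$, so that $T$ preserves the Hodge structure and the limit Hodge structure is genuinely that of $X$, and (ii) upgrades quasi-unipotence to finite order. Once $N=0$ is secured, polarizability follows formally from Steenbrink's theory and the semi-simplicity of polarizable Hodge structures, and in particular it sidesteps the fact that the K\"ahler variation over $\Delta^*$ carries no available polarization of its own.
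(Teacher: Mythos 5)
Your reduction to a one-parameter degeneration rests on a false claim about the K\"ahler locus $U$. The K\"ahler condition is open only in the classical (Euclidean) topology of $B$ (by Kodaira--Spencer stability); it is not the complement of a closed analytic subset, or at least you give no reason why it should be. Irreducibility of $B$ guarantees density and connectedness only for \emph{analytic Zariski} open subsets, i.e.\ complements of proper closed analytic subsets; an arbitrary nonempty classical open subset of an irreducible analytic space need be neither dense nor connected (take $B$ the unit disc and $U$ a small ball). This breaks your argument in three places: (a) $U$ need not be dense, so $0\in\overline{U}$ is unjustified; (b) $U$ need not be connected, let alone irreducible, so the analytic continuation of the period map from a neighborhood of $b$ to all of $U$ --- and with it the assertion that the local monodromy at $0$ preserves the Hodge structure of $X$ --- does not follow; (c) even granting that $U$ is open and dense, there need not exist any disc $\Delta$ centered at $0$ with $\Delta^*\subset U$: for instance $U=\mathbb{C}\setminus\mathbb{R}$ is open and dense in $\mathbb{C}$, yet every punctured disc about $0$ meets its complement, so ``a general holomorphic disc'' does not help.

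This is exactly the difficulty that the paper's Proposition \ref{theoeasycons} and Corollary \ref{coromono} are designed to circumvent: instead of the K\"ahler locus one works with the locus $V$ where the Fr\"olicher spectral sequence of the fibers degenerates at $E_1$, which \emph{is} analytic Zariski open (hence dense and connected in the irreducible $B$), and inside it with the locus where the holomorphically varying Hodge filtrations are opposed to their conjugates, which is the complement of a closed real analytic subset and contains the K\"ahler point. Rigidity (i) plus analytic continuation on the connected set $V$, together with the surjectivity of $\pi_1(V,t)\to\pi_1(B^0,t)$, then shows that the full monodromy preserves the Hodge structure, and the disc through $0$ can be chosen not contained in $\mathcal{D}\cup(B\setminus V)$ precisely because this is a proper closed analytic subset. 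Once this foundational step is repaired, the remainder of your argument is essentially the paper's: semi-simplicity of $T$ from (ii), unipotence from Steenbrink's Theorem \ref{theounip} after base change and blow-up producing a reduced normal crossing central fiber (note that for a non-polarized K\"ahler variation you cannot invoke Borel's monodromy theorem for quasi-unipotence --- abstract non-polarized variations can have arbitrary monodromy --- so the geometric statement, applied after the reduction, is genuinely needed), hence $T=\mathrm{Id}$, $N=0$, purity of the limit mixed Hodge structure, and polarizability read off from the $E_1$-terms of the weight spectral sequence, where projectivity of the central fiber enters both for the existence of Steenbrink's limit mixed Hodge structure and for the polarizability of the cohomology of the strata.
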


Similarly, Theorem \ref{theotop}  will be obtained as a consequence of the following
\begin{theo} \label{theonewtop}  Let $Y$ be a compact complex manifold. Assume the automorphisms of the cohomology algebra of $Y$ are semi-simple. Then, if $Y$ admits a projective specialization, the cohomology algebra of $Y$ carries a  Hodge structure   which is polarizable.
\end{theo}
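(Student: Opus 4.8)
The plan is to transport a polarizable Hodge structure from the projective fiber $\mathcal{X}_0$ to $Y$ through the family, the essential point being to convert the possibly \emph{mixed} Hodge-theoretic information carried by the (possibly singular) $\mathcal{X}_0$ into a \emph{pure} polarizable structure on a smooth fiber. Write $f\colon\mathcal{X}\to B$ for the family, with $\mathcal{X}_b\cong Y$ smooth and $\mathcal{X}_0$ projective, and let $B^\circ\subset B$ be the dense Zariski-open locus over which $f$ is smooth; it contains $b$, and since $B$ is irreducible $B^\circ$ is connected. Parallel transport makes $R^*f_*\mathbb{Q}|_{B^\circ}$ a local system of graded $\mathbb{Q}$-algebras, so the monodromy representation takes values in $\mathrm{Aut}(H^*(Y,\mathbb{Q}),\cup)$ and the cohomology algebra of every smooth fiber is isomorphic to that of $Y$. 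After normalizing $B$ if necessary and applying the curve selection lemma, I would choose a holomorphic arc $\gamma\colon\Delta\to B$ with $\gamma(0)=0$ and $\gamma(\Delta^*)\subset B^\circ$, and pull the family back to a one-parameter degeneration $\pi\colon\mathcal{X}_\Delta\to\Delta$ whose general fiber $F$ is smooth (with $H^*(F,\mathbb{Q})\cong H^*(Y,\mathbb{Q})$ as algebras, by connectedness of $B^\circ$) and whose special fiber is the projective variety $\mathcal{X}_0$.

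The decisive use of the hypothesis is then the following. The local monodromy operator $T$ of this degeneration on $H^k(F,\mathbb{Q})$ is the image of the loop $\gamma|_{\partial\Delta}\in\pi_1(B^\circ)$, hence an automorphism of the cohomology algebra of $Y$, so it is \emph{semisimple} by assumption; on the other hand, by the monodromy theorem it is \emph{quasi-unipotent}. A semisimple quasi-unipotent operator has finite order, so after a finite base change $z\mapsto z^m$ the monodromy becomes trivial, and in particular the logarithm of its unipotent part is $N=0$. I would then pass to a semistable model of the base-changed degeneration. The non-K\"ahlerness of $F$ (and of $Y$) blocks a direct appeal to a polarized variation of Hodge structure over $\Delta^*$; but since $\mathcal{X}_0$ is projective, one can arrange the semistable model so that every component of its reduced normal-crossings special fiber is projective, and Steenbrink's geometric construction of the limiting mixed Hodge structure on $H^k(F,\mathbb{Q})$, which uses only the cohomology of these components and their intersections, then applies and is polarizable. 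Because $N=0$, this limiting mixed Hodge structure is \emph{pure of weight $k$}, and because cup-product is a morphism of limiting mixed Hodge structures, the collection of the $H^k(F,\mathbb{Q})$ forms a Hodge structure on the cohomology algebra of $F$ in the sense of Definition \ref{defiHScohalg}(1).

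It remains to obtain polarizability, and this is again where the projectivity of $\mathcal{X}_0$ enters. Since $N=0$, the Clemens--Schmid exact sequence (equivalently the local invariant cycle theorem, valid here by projectivity of the special-fiber components) shows that the specialization map $sp\colon H^k(\mathcal{X}_0,\mathbb{Q})\to H^k(F,\mathbb{Q})$ is a \emph{surjective} morphism of mixed Hodge structures onto the now trivial monodromy invariants. As its target is pure of weight $k$ and $\mathcal{X}_0$ is complete, strictness forces $sp$ to factor through a surjection $\mathrm{Gr}^W_k H^k(\mathcal{X}_0,\mathbb{Q})\twoheadrightarrow H^k(F,\mathbb{Q})$; the source is the top weight-graded piece of the cohomology of a projective variety, hence a polarizable pure Hodge structure, and a quotient of a polarizable Hodge structure is polarizable. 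Thus each $H^k(F,\mathbb{Q})$ carries a polarizable weight-$k$ Hodge structure, and transporting these along the algebra isomorphism $H^*(F,\mathbb{Q})\cong H^*(Y,\mathbb{Q})$ produces the desired polarizable Hodge structure on the cohomology algebra of $Y$. I expect the main obstacle to be exactly this non-K\"ahler setting: running the entire limiting Hodge theory, its multiplicativity, and the Clemens--Schmid surjectivity off the projective special fiber and its semistable model rather than off a polarized variation over $\Delta^*$, since the general fibers carry no K\"ahler class to supply polarizations directly.
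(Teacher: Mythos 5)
Your reduction to a one-parameter degeneration, the identification of the local monodromy $T$ as an automorphism of the cohomology algebra of $Y$ (hence semisimple), the conclusion that $T$ is quasi-unipotent and therefore of finite order, killed by a finite base change, and the appeal to Steenbrink's limiting mixed Hodge structure built out of the projective strata of a semistable model: all of this matches the paper's proof (the paper runs semistable reduction first and quotes Steenbrink's unipotence theorem, so that semisimple plus unipotent gives $T=\mathrm{Id}$ outright, but this is the same argument). The genuine problem is your last paragraph. To obtain polarizability you invoke the Clemens--Schmid exact sequence, ``equivalently the local invariant cycle theorem, valid here by projectivity of the special-fiber components.'' That validity is exactly what is \emph{not} available in this setting. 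Every known proof of exactness of Clemens--Schmid, or of the local invariant cycle theorem, requires either a K\"ahler total space (Clemens), or a projective/algebraic morphism (so that the decomposition theorem or Saito's theory applies), or a polarized variation of Hodge structure on $\Delta^*$ together with Schmid's theorems. Here the nearby fibers are arbitrary compact complex manifolds --- nothing K\"ahler is assumed or known about them, which is the whole point of Theorem \ref{theonewtop} --- and projectivity of the components of the special fiber alone is not a known sufficient hypothesis for these statements. The open questions in Section \ref{secfurther1} of the paper (whether the nearby fibers even carry Hodge structures, whether they lie in Fujiki's class $\mathcal{C}$) show how little of the standard package survives off the central fiber; the paper is written precisely so as never to use a specialization map or an invariant cycle theorem. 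Your own closing sentence identifies this as the main obstacle, but it cannot be waved through by citing projectivity of $\mathcal{X}_0$.

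The fix is the paper's argument, which is both simpler and strictly weaker in its inputs. Once $T=\mathrm{Id}$, so $N=0$, Steenbrink's isomorphisms $N^r\colon {_WE_1^{-r,q+r}}\cong {_WE_1^{r,q-r}}$ on the weight spectral sequence force ${_WE_1^{r,q}}=0$ for all $r\neq 0$; hence the weight filtration of $H^k_{\rm lim}$ is trivial, the limit is pure, and by degeneration at $E_2$ one has $H^k_{\rm lim}\cong {_WE_2^{0,k}}$, a subquotient of ${_WE_1^{0,k}}$. The latter is a direct sum of (Tate twists of) cohomology groups of the intersections of components of the normal crossing fiber, which are projective manifolds; so it is a polarizable Hodge structure, and a subquotient of a polarizable Hodge structure is polarizable. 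Compatibility of the limit Hodge filtration with cup product then gives the polarizable Hodge structure on the algebra $H^*_{\rm lim}\cong H^*(Y,\mathbb{Q})$, with no mention of $H^*(\mathcal{X}_0)$ at all. (A related caution: your claim that $N=0$ makes the limit pure of weight $k$ tacitly identifies the weight filtration with the monodromy weight filtration of $N$; in this non-K\"ahler setting that identification should also be routed through the $E_1$-level statement above rather than through Schmid's theory.) With your final step replaced by this subquotient argument, your proof coincides with the paper's.
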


Note that, in Theorem \ref{theonewtop}, the assumptions on the cohomology algebra  are much stronger than in  Theorem \ref{theomainavechypot},  since in the former we are considering all the automorphisms of the cohomology algebra, while in the latter, only the automorphisms preserving also  the given Hodge structure are considered.  The semi-simplicity properties of the automorphisms of the cohomology algebra of the compact K\"ahler manifolds $X$ and $Y$ appearing respectively  in Theorem \ref{theomainavechypot} and Theorem \ref{theonewtop} will be established respectively in Sections \ref{sec2X} and  \ref{sec2X1}.

We will prove  Theorem \ref{theomain} in Section \ref{secprooftheomain} and   Theorem \ref{theotop} in Section  \ref{secfurther}.  The proofs rely on Steenbrink's work \cite{steenbrink} on limit Hodge structures, that works  using only the projectivity of the central fiber. We will also discuss in Section \ref{secfurther1} some further generalities on compact complex manifolds admitting a projective degeneration.

\vspace{0.5cm}

{\bf Thanks.} {\it  I thank Aknazar  Kazhymurat for asking me  this question and Radu Laza for helpful discussion.}

\section{The examples \label{sec2}}
\subsection{The manifold $X$ \label{sec2X}}
We will discuss here  the construction in dimension $7$ for  simplicity.   The case of dimension $>7$ will follow by taking a product with projective space and applying  similar arguments. In dimension $7$, a  manifold $X$ satisfying the assumptions of  Theorem \ref{theomainavechypot}  will be obtained by a slight variation of the examples  in  \cite{voisinjdg}.
We start from a $2$-dimensional complex torus $T$ admitting an endomorphism $\phi$ which is an isogeny  and  whose action
$\psi=\phi_*$ on $H_1(T,\mathbb{Q})$ has the property that the Galois group of the field $\mathbb{Q}[\psi]$ is as big as possible, that is, acts on the eigenvalues of $\psi$  as the symmetric group of $4$ elements. It is clear that such a  complex torus equipped with an endomorphisms  is rigid, and it is proved in \cite{voisin} that such a torus is not projective (this does not contradict \cite{catanese} which considers a finite group action).  Let $\widehat{T}:={\rm Pic}^0(T)$. Denote by $\mathcal{L}$  the Poincar\'{e} divisor on $T\times \widehat{T}$, and let  $\mathcal{L}_\phi:=(\phi,Id_{\widehat{T}})^*\mathcal{L}$. Let $t\in T,\,\,\hat{t}\in\widehat{T}$ be two points.
Let $W=\mathbb{P}^1\times T\times \widehat{T}$. We choose two distinct points $t_1,\,t_2\in\mathbb{P}^1$. $W$ contains then two disjoint complex tori, namely
$t_1\times T\times \hat{t}$ and $t_2\times{t}\times\widehat{T}$. We first blow them up in $W$, getting a compact K\"ahler manifold $\widetilde{W}$.
Next, using the same notation   $\mathcal{L},\,\mathcal{L}_\phi$  for the line bundles pulled-back to $\widetilde{W}$, we  construct the $\mathbb{P}^1$-bundles
\begin{eqnarray}
\label{eqtardiveE} P_1:=\mathbb{P}(\mathcal{E}_1),\,\,\mathcal{E}_1:=\mathcal{O}_{\widetilde{W}}\oplus\mathcal{L},\,\,\,P_2:=\mathbb{P}(\mathcal{E}_2),\,\,
\mathcal{E}_2:=\mathcal{O}_{\widetilde{W}}\oplus\mathcal{L}_\phi
\end{eqnarray}
over  $\widetilde{W}$.
Note that the $\mathbb{P}^1$-bundles $P_1,\,P_2$ over $\widetilde{W}$ have   sections $\sigma_1$, resp. $\sigma_2$,  given by the
surjective morphisms $\mathcal{O}_{\widetilde{W}}\oplus\mathcal{L}\rightarrow  \mathcal{L}$, resp. $\mathcal{O}_{\widetilde{W}}\oplus\mathcal{L}_\phi\rightarrow  \mathcal{L}_\phi$. It follows that the
fibered product $ P_1\times_{\widetilde{W}}P_2\rightarrow  \widetilde{W}$ has a section $\sigma=(\sigma_1,\sigma_2)$.
The compact K\"ahler manifold $X$ will be defined as the blow-up  of the fibered product of $P_1$ and $P_2$ along $\Sigma:=\sigma(\widetilde{W})$:
\begin{eqnarray} \label{eqdefX} X:= \widetilde{P_1\times_{\widetilde{W}}P_2}^{\Sigma}.
\end{eqnarray}
We now prove a few properties that will be needed in the next sections.
\begin{lemm}\label{proajoute1}  The Hodge structure on the cohomology algebra   $H^*(X,\mathbb{Q})$ is  rigid under small  deformations of $X$.
\end{lemm}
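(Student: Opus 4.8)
The plan is to reduce the assertion to the rigidity of the weight-one Hodge structure on $H^1(X,\mathbb{Q})$, and then to obtain the latter from the rigidity of the torus $T$ together with its endomorphism $\phi$.

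First I would determine $H^*(X,\mathbb{Q})$ as a Hodge structure by applying in turn the projective bundle formula to $P_1,P_2\to\widetilde W$ and the blow-up formula to $\widetilde W\to W$ and to $X\to P_1\times_{\widetilde W}P_2$. All of these are isomorphisms of Hodge structures, and together they exhibit each $H^k(X,\mathbb{Q})$ as a direct sum of Tate twists of the groups $H^a(T,\mathbb{Q})\otimes H^b(\widehat T,\mathbb{Q})$. As $H^*(T)=\bigwedge^*H^1(T)$ and likewise for $\widehat T$, the entire Hodge structure on $H^*(X,\mathbb{Q})$ is obtained by multilinear algebra from the weight-one Hodge structures on $H^1(T)$ and $H^1(\widehat T)$, together with the Hodge--Tate classes coming from the construction: the relative hyperplane classes $h_1,h_2$, the exceptional divisor classes, the point class on $\mathbb{P}^1$, and the classes $c_1(\mathcal{L})$, $c_1(\mathcal{L}_\phi)$. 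In particular $H^1(X)=H^1(T)\oplus H^1(\widehat T)=H^1(\mathrm{Alb}(X))$. It therefore suffices to show that, along any small deformation $\mathcal{X}\to B$ of $X$, these building blocks stay put: the distinguished classes remain of their prescribed type, and the weight-one Hodge structure does not vary.

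The main point — and the step I expect to be the principal obstacle — is to show that along any deformation the classes $c_1(\mathcal{L})$ and $c_1(\mathcal{L}_\phi)$ in $H^2(X,\mathbb{Q})$ remain of type $(1,1)$. On the Albanese torus $A=T\times\widehat T$ these two $(1,1)$-classes encode, respectively, the canonical duality $H^1(\widehat T)\cong H^1(T)^\vee$ carried by the Poincar\'e bundle and the endomorphism $\psi=\phi_*$ of $H^1(T)$; keeping $c_1(\mathcal{L}_\phi)$ of type $(1,1)$ is exactly the condition that $\psi$ be a morphism of the Hodge structure on $H^1(T)$, i.e.\ that $\psi(H^{1,0}(T))\subseteq H^{1,0}(T)$. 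To force this I would argue by deformation theory. The Albanese is canonically attached to $X$ and deforms with it, and $c_1(\mathcal{L})$, $c_1(\mathcal{L}_\phi)$ are pulled back from $A$; tracing the relative tangent sequences of the tower $X\to P_1\times_{\widetilde W}P_2\to\widetilde W\to W$, the obstruction to lifting an Albanese deformation $\kappa_A$ through the bundles $P_1=\mathbb{P}(\mathcal{O}\oplus\mathcal{L})$ and $P_2=\mathbb{P}(\mathcal{O}\oplus\mathcal{L}_\phi)$ is measured by the contractions $\kappa_A\lrcorner c_1(\mathcal{L})$ and $\kappa_A\lrcorner c_1(\mathcal{L}_\phi)$ in $H^{0,2}$. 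Thus an Albanese direction that is realized by an honest deformation must annihilate both classes, which is precisely the condition that both remain of type $(1,1)$. The delicate point, to be settled by this cohomological computation, is that the image of $H^1(X,T_X)$ in the deformation space of the Albanese consists only of such directions — equivalently, that $X$ carries no exotic deformation inducing an Albanese deformation which destroys $\mathcal{L}$ or $\mathcal{L}_\phi$.

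Granting this, the conclusion follows. On the locus where $c_1(\mathcal{L})$ and $c_1(\mathcal{L}_\phi)$ both remain of type $(1,1)$, the deformed Albanese still carries the duality and the endomorphism $\psi$ as a morphism of Hodge structures, so we remain within deformations of the torus $T$ equipped with $\phi$. There $H^{1,0}(T)$ is a $\psi$-stable $2$-plane, hence a direct sum of two eigenlines of $\psi$; since the Galois group of $\mathbb{Q}[\psi]$ is the full symmetric group $S_4$, the characteristic polynomial of $\psi$ is irreducible and $\psi$ has four distinct eigenvalues, so only finitely many such $2$-planes occur. The relevant Hodge locus is therefore discrete, and by connectedness of $B$ the period point is constant; this is the rigidity of $(T,\phi)$ already recorded in the construction. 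Hence the Hodge structure on $H^1(X)=H^1(T)\oplus H^1(\widehat T)$ is rigid, and by the reduction of the first paragraph so is the Hodge structure on the whole cohomology algebra $H^*(X,\mathbb{Q})$.
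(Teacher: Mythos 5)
Your overall plan matches the paper's strategy in outline: reduce to the Hodge structures on $H^1$ and $H^2$ (using that the cohomology algebra is generated in degree $\leq 2$), show that the distinguished Hodge classes --- in particular $c_1(\mathcal{L})$ and $c_1(\mathcal{L}_\phi)$ --- stay of type $(1,1)$ along any deformation, and conclude by the rigidity of the pair $(T,\phi)$, whose proof via the $S_4$ Galois action you reproduce correctly. However, your proposal has a genuine gap, and you flag it yourself: the assertion that ``the image of $H^1(X,T_X)$ in the deformation space of the Albanese consists only of such directions --- equivalently, that $X$ carries no exotic deformation inducing an Albanese deformation which destroys $\mathcal{L}$ or $\mathcal{L}_\phi$'' is exactly the content of the lemma, not a step you can defer to an unspecified ``cohomological computation.'' Your obstruction-theoretic observation (that a first-order deformation of $A$ lifts to a deformation of the pair $(A,\mathcal{L})$ only if it annihilates $c_1(\mathcal{L})$ in $H^{0,2}$) only constrains deformations of $X$ that are \emph{a priori} known to carry the tower structure $X\rightarrow P_1\times_{\widetilde W}P_2\rightarrow \widetilde W\rightarrow W$ along with them; for an arbitrary deformation of $X$ it says nothing, and so the argument is circular at the crucial point.

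The missing ingredient, which is how the paper closes this gap, is Kodaira's stability theorems \cite{kodstab}: a small deformation of a blow-up along a submanifold is again a blow-up of a deformation of the base along a deformation of the center, and a small deformation of a projective bundle fibration is again a projective bundle fibration. Applying these repeatedly, every small deformation $\mathcal{X}_b$ of $X$ is again the blow-up of a fibered product $\mathbb{P}(\mathcal{E}_{1,b})\times_{\widetilde W_b}\mathbb{P}(\mathcal{E}_{2,b})$ over a deformation $\widetilde W_b$, which itself contracts onto a $\mathbb{P}^1$-fibration over a complex torus $K_b\cong T_b\oplus T'_b$; taking determinants of $\mathcal{E}_{i,b}$ produces \emph{holomorphic} line bundles on $\widetilde W_b$ deforming $\mathcal{L}$ and $\mathcal{L}_\phi$, which is precisely the persistence of type $(1,1)$ that you need (and it also gives, for free, that the splitting $H^1=H^1(T)\oplus H^1(\widehat T)$ remains a splitting into sub-Hodge structures --- a point your Albanese argument also leaves unaddressed, since the $(1,1)$-condition on $c_1(\mathcal{L})$ alone encodes the duality only once that splitting is known to be compatible with the deformed Hodge structure). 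With Kodaira stability in hand, the rest of your argument (rigidity of $(T,\phi)$, hence of $H^1$, hence of $H^2$ via the list of Hodge classes) goes through and agrees with the paper.
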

By ``rigid under small  deformations of $X$", we mean  that  any  flat holomorphic   proper holomorphic map  $\mathcal{X}\rightarrow B$  to a  pointed analytic space  $(B,b_0)$, with $\mathcal{X}_{b_0}\cong X$ induces near $b_0$ the trivial variation of Hodge structures. (As $X$ is K\"ahler, the fibers $\mathcal{X}_b$ are also K\"ahler for $b$ close to $b_0$.)

\begin{proof}[Proof of Lemma \ref{proajoute1}] We observe that the cohomology algebra of $X$ is generated in degree $\leq 2$, so it suffices to prove rigidity for the induced Hodge structures on $H^1(X,\mathbb{Q})$ and $H^2(X,\mathbb{Q})$. We have
\begin{eqnarray}\label{eqlistpourh2} H^2(X,\mathbb{Q})=\bigwedge^2H^1(X,\mathbb{Q})\oplus \mathbb{Q}d\oplus \mathbb{Q}\hat{d}\oplus \mathbb{Q} h\oplus \mathbb{Q}H_1\oplus \mathbb{Q}{H}_2\oplus \mathbb{Q}e,\end{eqnarray}
where $e$ is the class of the exceptional divisor $E$  over $\Sigma$, $d$, $\hat{d}$ are the pull-backs to $X$ of the classes of the  exceptional divisors of $ \widetilde{W}$, $h$ is the pull-back to $X$ of the class
$c_1(\mathcal{O}_{\mathbb{P}^1}(1))$ via the composite  morphism
$$X\rightarrow P_1\times_{\widetilde{W}}P_2\rightarrow \widetilde{W}\rightarrow W\stackrel{{\rm pr}_1}{\rightarrow} \mathbb{P}^1,$$ and
$H_1$, ${H}_2$ are the pull-backs to $X$  of  the  first Chern classes $c_1(\mathcal{O}_{P_i}(1))$ of the projective bundles $P_i\rightarrow \widetilde{W}$.

Given a  deformation
$\mathcal{X} \rightarrow B$ of $X\cong \mathcal{X}_0$, we can apply  the Kodaira stability theorems \cite{kodstab}. They say that a small deformation of a  compact complex  manifold  which is the blow-up $\widetilde{Y}\rightarrow Y$ of a manifold $Y$  along a submanifold $\Sigma\subset Y$ is the blow-up of a deformation $Y_t$  of $Y$ along a deformation $\Sigma_t\subset Y_t$ of $\Sigma$; furthermore a small deformation of a projective bundle fibration $P\rightarrow Y$ is a projective bundle fibration $P_t\rightarrow Y_t$. This implies in our case   that for $b$ close to $b_0$,
there is a divisor $E_b\subset \mathcal{X}_b$ which is a deformation of $E$ and is  the  exceptional divisor of a contraction
$\mathcal{X}_b\rightarrow Q_b$, where $Q_b$ is a deformation of $Q_{b_0}=P_1\times_{\widetilde{W}} P_2$. Furthermore, still  by \cite{kodstab}, the $\mathbb{P}^1\times \mathbb{P}^1$-fibration on $Q_{b_0}$ deforms to a $\mathbb{P}^1\times \mathbb{P}^1$-fibration on $Q_b$, hence we get
a deformation $Q_b\rightarrow \widetilde{W}_b$ of the morphism $P_1\times_{\widetilde{W}}P_2\rightarrow  \widetilde{W}$. Finally and again by  Kodaira stability theorem, the manifold $\widetilde{W}_b$ for $b$ close to $b_0$ contains contractible divisors which are deformations of the
exceptional  divisors of $\widetilde{W}$ over $W$. Thus for $b$ close to $b_0$,  $\widetilde{W}_b$ contracts to a compact complex manifold which is a deformation $W_b$ of the complex manifold  $\mathbb{P}^1\times T\times \widehat{T}$, hence is  a $\mathbb{P}^1$-fibration over a complex torus $K_b$ which is itself  a deformation of $K_{b_0}=T\times \widehat{T}$.   Furthermore, the complex manifold $W_b$ contains  the images of the two contracted divisors, which are complex submanifolds obtained as small  deformations  of  $T\times \hat{t}\subset W$ and $t\times \widehat{T}\subset W$. They are thus $2$-dimensional complex tori
$T_b$, ${T}'_b$ contained in $W_b$,  which map to  $2$-dimensional subtori $T_b,\,T'_b$ of $K_b$ intersecting in one  point. We thus conclude that
$K_b\cong T_b\oplus T'_b$.

Moreover the image
$\Sigma\subset P_1\times_{\widetilde{W}}P_2$  of the contracted  divisor $E$ deforms to the image $\Sigma_b\subset Q_b$ of the contracted divisor $E_b$, hence the $\mathbb{P}^1\times \mathbb{P}^1$- fibration  $Q_b\rightarrow W'_b$ has a section for $b$ close to $b_0$. It follows that there are  vector bundles
$\mathcal{E}_{1,b}$, $\mathcal{E}_{2,b}$ on  $ \widetilde{W}_b$ obtained as deformations of    the vector bundles $\mathcal{E}_1$, $\mathcal{E}_2$ of (\ref{eqtardiveE})  on $\widetilde{W}$,  such that $$Q_b\cong \mathbb{P}(\mathcal{E}_{1,b})\times_{\widetilde{W}_b}\mathbb{P}(\mathcal{E}_{2,b}).$$
Finally, by taking the  determinants of $\mathcal{E}_{1,b}$, $\mathcal{E}_{2,b}$,  we conclude that there are holomorphic  line bundles $\mathcal{L}_b$, $\mathcal{L}_{\phi,b} $ on $\widetilde{W}_b$ which are respective deformations of
$\mathcal{L}$, $\mathcal{L}_{\phi} $ on $\widetilde{W}$. As these line bundles are pulled-back at $b_0$ from line bundles on
$K_{b_0}$, the same is true of  the line bundles  $\mathcal{L}_b$, $\mathcal{L}_{\phi,b} $ on $\widetilde{W}_b$.  In conclusion we proved that
the complex torus $K_b\cong T_b\oplus T'_b$ carries two holomorphic line bundles
$\mathcal{L}_b$, $\mathcal{L}_{\phi,b}$ deforming $\mathcal{L}$ and $\mathcal{L}_\phi$ respectively.
Using $\mathcal{L}_b$, one concludes that $T'_b\cong \widehat{T_b}$, and using  $\mathcal{L}_{\phi,b}$, one concludes that
$T_b$ carries an endomorphism deforming $\phi$. It follows that $T_b\cong T$ since the pair $(T,\phi)$ is rigid.
We thus proved that the Hodge structure on  $H^1(\mathcal{X}_b,\mathbb{Q})$ is isomorphic to the Hodge structure on
$H^1(X ,\mathbb{Q})$. Furthermore, by the above arguments,  all the degree $2$ Hodge classes on $X$  listed in (\ref{eqlistpourh2}) deform to
degree $2$ Hodge classes on $\mathcal{X}_b$, so  (\ref{eqlistpourh2}) shows that  the Hodge structure on  $H^2(\mathcal{X}_b,\mathbb{Q})$ is isomorphic to the Hodge structure on
$H^2(X ,\mathbb{Q})$.
\end{proof}
\begin{lemm}\label{leautoalgcoh} The automorphisms of the Hodge structure on the  cohomology algebra $H^*(X,\mathbb{Q})$ are semisimple.
\end{lemm}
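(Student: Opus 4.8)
The plan is to show that the group $G$ of automorphisms of the Hodge structure on $H^*(X,\mathbb{Q})$ (respecting the grading and the cup-product) contains no nontrivial unipotent element. This group is a linear algebraic group over $\mathbb{Q}$: the automorphisms of a $\mathbb{Q}$-Hodge structure form the unit group of the finite-dimensional $\mathbb{Q}$-algebra of its Hodge endomorphisms, and preserving the cup-product is a polynomial condition. Hence the Jordan components $g_s,g_u$ of any $g\in G$ again lie in $G$, and proving $g_u=\mathrm{Id}$ for all $g$ is the same as proving that every element of $G$ is semisimple. Since the cohomology algebra is generated in degree $\le 2$ (as noted in the proof of Lemma \ref{proajoute1}), a unipotent $u\in G$ is the identity once it acts trivially on $H^1(X,\mathbb{Q})$ and $H^2(X,\mathbb{Q})$, so it suffices to treat these two groups.

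The main point is the action on $H^1$. Here $H^1(X,\mathbb{Q})=V\oplus V'$ with $V=H^1(T,\mathbb{Q})$ and $V'=H^1(\widehat T,\mathbb{Q})\cong V^\vee(-1)$, since blowing up and forming projective bundles leaves $H^1$ unchanged. The endomorphism $\psi$ makes $V$ a one-dimensional vector space over the quartic field $K=\mathbb{Q}[\psi]\subseteq \mathrm{End}_{\mathrm{HS}}(V)$, and the maximality of the Galois group forces, as in \cite{voisin}, that $V$ is a simple Hodge structure; a division-algebra-plus-dimension count then gives $\mathrm{End}_{\mathrm{HS}}(V)=K$, and likewise $\mathrm{End}_{\mathrm{HS}}(V')=K$. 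I would then prove that $\mathrm{Hom}_{\mathrm{HS}}(V,V')=0$: both being simple of the same dimension, a nonzero map would be an isomorphism $V\cong V^\vee(-1)$, i.e. a nondegenerate form $b\colon V\otimes V\to\mathbb{Q}(-1)$ of pure Hodge type, with $b(V^{1,0},V^{1,0})=0$. Its adjoint involution would be an automorphism of the field $K=\mathrm{End}_{\mathrm{HS}}(V)$; but a quartic field whose Galois closure has group $S_4$ has no nontrivial automorphism (the normalizer of $S_3$ in $S_4$ is $S_3$), so $b$ would be $K$-bilinear, hence diagonal in $V\otimes_{\mathbb{Q}}\mathbb{C}=\bigoplus_{\sigma\colon K\to\mathbb{C}}\mathbb{C}_\sigma$, contradicting $b(V^{1,0},V^{1,0})=0$. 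Therefore $\mathrm{End}_{\mathrm{HS}}(H^1)=K\times K$ and $\mathrm{Aut}_{\mathrm{HS}}(H^1)=K^\ast\times K^\ast$; as multiplication by a unit of a field is a semisimple operator, this group has no nontrivial unipotent element, so $u|_{H^1}=\mathrm{Id}$.

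Finally I would propagate triviality to $H^2$. Since $u$ fixes $H^1$ pointwise it fixes $\bigwedge^2 H^1\subset H^2$, and in particular the classes $c_1(\mathcal{L}),c_1(\mathcal{L}_\phi)\in \bigwedge^2 H^1$; by the decomposition (\ref{eqlistpourh2}) it then remains to show that $u$ fixes the six classes $d,\hat d,h,H_1,H_2,e$. The hard part will be exactly this step. The tool is the explicit presentation of $H^*(X,\mathbb{Q})$ furnished by the construction: the $\mathbb{P}^1$-bundle relations $H_1^2=c_1(\mathcal{L})\,H_1$ and $H_2^2=c_1(\mathcal{L}_\phi)\,H_2$, the relation $h^2=0$ pulled back from $\mathbb{P}^1$, and the blow-up relations satisfied by $d,\hat d$ over $\widetilde W$ and by $e$ over $\Sigma$. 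Writing $u(\xi)=\xi+N(\xi)$ with $N$ nilpotent and $N|_{\bigwedge^2 H^1}=0$, and applying $u$ to each of these relations, one obtains constraints on the correction terms $N(\xi)\in H^2$ that, treated class by class and using the nondegeneracy of the cup-product pairings on $X$, force every $N(\xi)$ to vanish; this is where the computation of \cite{voisinjdg} is reused. Once $u=\mathrm{Id}$ on $H^1\oplus H^2$, it is the identity on the whole algebra, and the lemma follows.
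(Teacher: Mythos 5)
Your overall architecture is sound, and your treatment of $H^1$ is correct and genuinely different from the paper's. Where the paper simply quotes that $T$ and $\widehat T$ are simple and non-isogenous and then argues by taking powers of a given automorphism $\gamma$, you reduce, via Jordan decomposition inside the algebraic group $G$ of Hodge-algebra automorphisms, to showing that unipotents are trivial; you compute $\mathrm{End}_{\mathrm{HS}}(H^1(T,\mathbb{Q}))=K$ by the division-algebra dimension count, and you rule out $\mathrm{Hom}_{\mathrm{HS}}(V,V^\vee(-1))\neq 0$ by the adjoint-involution argument, using that a quartic field with Galois closure of group $S_4$ has trivial automorphism group, so an invariant form would be $K$-symmetric, hence diagonal on the eigenspace decomposition, contradicting $b(V^{1,0},V^{1,0})=0$. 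This is complete and arguably more self-contained than the corresponding two lines of the paper. The unipotent framing also buys a cleaner endgame: a unipotent operator preserving a line fixes its generator, so you avoid the paper's closing step that semisimplicity of $\gamma^r$ implies that of $\gamma$.

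However, the degree-$2$ step---which you yourself call ``the hard part''---is a genuine gap, and it is exactly where the mathematical content of the lemma lies. Applying $u$ to the relations does not by itself force the correction terms to vanish: the constraint coming from $h^2=0$ reads $2h\,N(h)+N(h)^2=0$, which is satisfied by $N(h)=c\,h$ for every $c$ (the solution set is a cone), so the relation alone cannot even distinguish fixing $h$ from rescaling it, and excluding components of $N(h)$ along $d,\hat d, H_1,H_2,e$ or along Hodge classes of $\bigwedge^2H^1(X,\mathbb{Q})$ requires controlling the whole solution set, not merely knowing that $u(h)$ lies in it. What is needed---and what the paper's Claim actually proves---are uniqueness/isolatedness statements in $\mathbb{P}(\mathrm{Hdg}^2(X))$: $h$ is an isolated solution of $x^2=0$; $H_1,H_2$ are isolated solutions of $x^2=x\,l_i$ (coming from $c_2(\mathcal{E}_i)=0$ for the bundles (\ref{eqtardiveE})); $d,\hat d$ are, up to scalar, the only Hodge classes outside $\mathbb{P}(\bigwedge^2H^1(X,\mathbb{Q})\cap\mathrm{Hdg}^2(X))$ for which cup-product $H^1(X,\mathbb{Q})\rightarrow H^3(X,\mathbb{Q})$ is non-injective; and $e$ spans $\ker\tau_*$, with $\tau_*$ determined by $\tau^*$ and Poincar\'e duality. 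Granting these, $u$ permutes the finitely many isolated points of $u$-invariant algebraic sets, so a power of $u$ preserves each of the six lines, hence (being unipotent) fixes their generators and is the identity on $H^2$, hence on all of $H^*(X,\mathbb{Q})$; a finite-order unipotent is trivial, so $u=\mathrm{Id}$. Deferring all of this to ``the computation of \cite{voisinjdg}''---which moreover concerns a related but different manifold---is not a proof; those characterizations are the heart of the lemma and your proposal does not establish them.
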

\begin{proof} The automorphisms of the Hodge structure on $H^1(X,\mathbb{Q})=H^1(T,\mathbb{Q})\oplus H^1(T,\mathbb{Q})^*$ are of the form   $$ \begin{pmatrix}
 \psi_1& 0
 \\
0&\psi_2
\end{pmatrix},
$$
where $\psi_1$ resp.  $\psi_2$  are   automorphisms of  the Hodge structure on   $H^1(T,\mathbb{Q})$,  resp. $ H^1(T,\mathbb{Q})^*$. Indeed, the two tori $T,\,\widehat{T}$ are simple and not isogenous. Next, using the fact that $^t\psi$, resp. $\psi$ acts
in an irreducible way as an automorphism of the Hodge structure on  $H^1(T,\mathbb{Q})$, resp. $ H^1(T,\mathbb{Q})^*$, we get that
$\psi_1\in\mathbb{Q}[^t\psi]$, resp.  $\psi_2\in\mathbb{Q}[\psi]$. As the endomorphism $\psi$ is semisimple, it follows that any automorphism of the Hodge structure on  $H^1(X,\mathbb{Q})$ is semisimple.

It remains to prove semisimplicity of the action on the higher degree cohomology groups. As the cohomology algebra of $X$ is generated in degree $\leq 2$, it suffices to prove the semisimplicity on degree $2$ cohomology. As the automorphisms of the Hodge structure  on $H^2(X,\mathbb{Q})$ preserve the space of Hodge classes,  using the decomposition (\ref{eqlistpourh2}), we only need to prove
\begin{claim} An automorphism $\gamma$ of the  Hodge structure on the cohomology algebra of $X$  has a power $\gamma^r$, $r>0$ which preserves the space
$\langle d,\,\hat{d},\, h,\,H_1,\,{H}_2,\,e\rangle\subset H^2(X,\mathbb{Q})$  and  acts in a semisimple way on it.
\end{claim}
\begin{proof}  The automorphism $\gamma$ acts on $H^*(X,\mathbb{Q})$ preserving the algebra structure and the Hodge structure.
In particular, it preserves the subalgebra generated by $H^1(X,\mathbb{Q})$ and the subspace ${\rm Hdg}^2(X)$, hence it preserves ${\rm Hdg}^2(X)\cap \bigwedge^2H^1(X,\mathbb{Q})$. We claim that a power of $\gamma$ preserves up to a scalar multiple the classes $h,\,H_1,\,H_2$. For the class  $h$, the reason is that
$h^2=0$ in $H^4(X,\mathbb{Q})$,   and $h$ is an isolated solution of this set of  equations in $\mathbb{P}({\rm Hdg}^2(X))$; moreover $\gamma$ acts on $\mathbb{P}({\rm Hdg}^2(X))$ preserving the algebraic subset  defined by these equations, since it preserves the algebra structure of $H^*(X,\mathbb{Q})$. For the classes $H_1,\,H_2$, one observes that they satisfy an equation of the form
$H_i^2=H_il_i$, for some classes $l_i\in \bigwedge^2H^1(X,\mathbb{Q})\cap {\rm Hdg}^2(X)$. These equations come from the fact that the vector bundles $\mathcal{E}_i$ of (\ref{eqtardiveE}) have $c_2=0$. Again the classes $H_i$  are isolated solutions of these equations in  $\mathbb{P}({\rm Hdg}^2(X))$ so that a power of  $\gamma$ will fix  them.
 Similarly, a power of $\gamma$ preserves up to a scalar the classes
$ d$ and $\hat{d}$, because they have the property that
the cup-product map
$d\cup,\,\hat{d}\cup : H^1(X,\mathbb{Q})\rightarrow H^3(X,\mathbb{Q})$ is not injective, and they are the only elements  of  $\mathbb{P}({\rm Hdg}^2(X))$ which do not belong to  $\mathbb{P}( \bigwedge^2H^1(X,\mathbb{Q})\cap {\rm Hdg}^2(X))$  and satisfy this condition. We conclude that a power $\gamma^r$, $r>0$, of $\gamma$ preserves the subalgebra generated by $H^1(X,\mathbb{Q})$ and
$h,\,H_1,\,H_2,\,d,\,\hat{d}$, that is, the cohomology subalgebra
$$H^*(P_1\times_{\widetilde{W}}P_2,\mathbb{Q})\stackrel{\tau}{\hookrightarrow}  H^*(X,\mathbb{Q}).$$
But then, using the fact that the morphism $\tau: X\rightarrow P_1\times_{\widetilde{W}}P_2$ is birational,   $\gamma^r$  also preserves the class $e$, since it generates the kernel of the morphism $\tau_*:H^2(X,\mathbb{Q})\rightarrow  H^2(P_1\times_{\widetilde{W}}P_2,\mathbb{Q})$. One uses here the fact that $\tau_*$ is determined by $\tau^*$ and the algebra structure, by Poincar\'{e} duality. Thus a power  $\gamma^r$ preserves $\langle h,\, H_1,\,H_2,\,d,\,\hat{d},\,e\rangle $ and acts in a diagonalized way on it. The claim is proved.
\end{proof}
We thus proved that a power of $\gamma$ acts in a semisimple way on $H^*(X,\mathbb{Q})$, which implies that $\gamma$ acts in a semisimple way since $\gamma$ is an automorphism.
\end{proof}
\subsection{The manifold $Y$ \label{sec2X1}}
The compact K\"ahler manifold $X$ constructed in the previous section has the property that the automorphisms of the cohomology algebra $H^*(X,\mathbb{Q})$ preserving its  Hodge structure are semisimple (see Lemma \ref{leautoalgcoh}). This is the assumption appearing in  Theorem \ref{theomainavechypot}. We are now going to construct a compact K\"ahler manifold $Y$  having the property that all  the automorphisms of  its cohomology algebra $H^*(Y,\mathbb{Q})$ are semisimple, which is the assumption appearing in  Theorem \ref{theonewtop}.

We will discuss here  the construction in dimension $5$ for  simplicity. The case of dimension $>5$ will follow by taking a product with projective space and applying  similar arguments.
In dimension $5$, a manifold $Y$ satisfying  the assumptions of  Theorem \ref{theonewtop} will be obtained by a small variant of \cite{voisin}. We start as in the previous section  from a $2$-dimensional complex torus $T$ admitting an endomorphism $\phi$ which is an isogeny and  whose action
$\psi=\phi_*$ on $H_1(T,\mathbb{Q})$ has the property that the Galois group of the field $\mathbb{Q}[\psi]$  acts on the eigenvalues of $\psi$  as the symmetric group of $4$ elements.

We start with $W:=T\times T\times \mathbb{P}^1$ and choose four distinct elements $t_1,\ldots,t_4$ in $\mathbb{P}^1$. Let $t\in T$ be chosen.
 $W$ contains  the following four disjoint complex submanifolds, all isomorphic to $T$:

\begin{eqnarray}\label{eqcenters} T\times t\times t_1,\,\,t\times T\times t_2,\,\,\Delta_T\times t_3,\,\, \Gamma_\phi\times t_4,
\end{eqnarray}
where as usual $\Delta_T$ is the diagonal of $T$ and $\Gamma_\phi$ is the graph of $\phi$.

We will denote by $Y$ the compact K\"ahler manifold  obtained by blowing-up $W$ along these four submanifolds. The tiny difference with the construction of \cite{voisin} is that the product with $\mathbb{P}^1$ and the choice of four distinct points $t_i$ in $\mathbb{P}^1$ makes the four centers (\ref{eqcenters}) of blow-up disjoint, while their images   in $T\times T$  are not disjoint. This forced us in \cite{voisin}  to blow-up first their intersection points in order  to separate them before blow-up, making the cohomological computations heavier.

We will need   the following  statement concerning the Hodge structures on the cohomology
algebra of $Y$.
\begin{prop} \label{proajoute}   The automorphisms of  the cohomology algebra   $H^*(Y,\mathbb{Q})$    are semi-simple.
\end{prop}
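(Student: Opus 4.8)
The plan is to follow the template of Lemma~\ref{leautoalgcoh}, but since here one must control \emph{all} graded algebra automorphisms $\gamma$ of $H^*(Y,\mathbb{Q})$, and not only those preserving a given Hodge structure, each step that previously relied on the Hodge decomposition has to be replaced by a purely ring-theoretic characterization of the distinguished classes. As before, the cohomology algebra is generated in degree $\le 2$: the space $H^*(Y,\mathbb{Q})$ is generated by $H^*(W,\mathbb{Q})$ together with the four exceptional classes $e_1,\dots,e_4\in H^2$, and $H^*(W,\mathbb{Q})=\bigwedge^{\!*}H^1(Y,\mathbb{Q})\otimes H^*(\mathbb{P}^1)$ is generated by $H^1(Y,\mathbb{Q})$ and the class $h\in H^2$ pulled back from $\mathbb{P}^1$. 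It therefore suffices to show that $\gamma$ is semisimple on $H^1(Y,\mathbb{Q})$ and on $H^2(Y,\mathbb{Q})$. For the latter I would use the blow-up decomposition
$$H^2(Y,\mathbb{Q})=\textstyle\bigwedge^2 H^1(Y,\mathbb{Q})\ \oplus\ \mathbb{Q}h\ \oplus\ \bigoplus_{i=1}^4\mathbb{Q}e_i .$$
Here $\bigwedge^2 H^1$ is the image of the cup-product, hence $\gamma$-stable with $\gamma$ acting as $\bigwedge^2(\gamma|_{H^1})$; so once semisimplicity on $H^1$ is established, the only remaining issue on $H^2$ is to control $h,e_1,\dots,e_4$.

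The heart of the argument is an intrinsic characterization of the exceptional classes. For $\eta\in H^2(Y,\mathbb{Q})$ set $K(\eta):=\ker\!\big(\cup\,\eta:H^1(Y,\mathbb{Q})\to H^3(Y,\mathbb{Q})\big)$; since $\gamma$ respects cup-product, $\dim K(\gamma\eta)=\dim K(\eta)$. Writing $\eta=\omega+c_0h+\sum_ic_ie_i$ and decomposing $H^3(Y,\mathbb{Q})=\bigwedge^3 H^1\oplus(h\cdot H^1)\oplus\bigoplus_iH^1(Z_i)$ by Künneth and the blow-up formula, the products $\omega\wedge-$, $h\cup-$ and $e_i\cup-$ land in independent summands. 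One checks that $h\cup-$ is injective, that $e_i\cup\alpha=0$ precisely when $\alpha$ lies in the $4$-dimensional space $K_i:=\ker\!\big(H^1(Y,\mathbb{Q})\to H^1(Z_i,\mathbb{Q})\big)$, and that $\ker(\omega\wedge-)$ has dimension $\le 2$ for $\omega\ne0$. The four subspaces $K_i$ are $V_2$, $V_1$, the anti-diagonal $\{(u,-u)\}$ and the graph $\{(-\psi^*w,w)\}$, where $\psi^*=\phi^*$ acts on $H^1(T,\mathbb{Q})$; because the four centers are disjoint and $\psi^*$ has no rational eigenvalue, these meet pairwise in $0$. Consequently $\dim K(\eta)=4$ if and only if $\eta$ is a nonzero multiple of a single $e_i$. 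Thus $\{\langle e_1\rangle,\dots,\langle e_4\rangle\}$ is intrinsically attached to the ring structure, $\gamma$ permutes these four lines, and a power $\gamma^r$ fixes each: $\gamma^r e_i=\lambda_i e_i$, so $\gamma^r$ preserves each $K_i=K(e_i)$.

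Semisimplicity on $H^1$ now follows cleanly. Preserving $K_2=V_1$ and $K_1=V_2$ forces $\gamma^r|_{H^1}=\mathrm{diag}(A,B)$ on $V_1\oplus V_2$; preserving the anti-diagonal $K_3$ gives $B=A$; and preserving the graph $K_4$ gives $A\psi^*=\psi^*A$. Since the characteristic polynomial of $\psi$ is irreducible with Galois group $S_4$, the commutant of $\psi^*$ in $\mathrm{End}\,H^1(T,\mathbb{Q})$ is the field $\mathbb{Q}[\psi^*]$, so $A\in\mathbb{Q}[\psi^*]$ is semisimple; hence $\gamma^r$, and therefore $\gamma$, is semisimple on $H^1$, and a fortiori on $\bigwedge^2 H^1$. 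Finally I would pin down $h$ by showing that the ring-theoretically defined locus $N=\{\eta:\eta^2=0\text{ and }\eta\cup e_i=0\ \forall i\}$ equals $\mathbb{Q}h$; the only delicate point is that $N$ contains no nonzero $\omega\in\bigwedge^2 H^1$, which again reduces to the absence of a rational eigenvector of $\psi^*$. As $\gamma^r$ preserves $N=\mathbb{Q}h$, it fixes $\langle h\rangle$, yielding a $\gamma^r$-stable splitting $H^2=\bigwedge^2 H^1\oplus\mathbb{Q}h\oplus\bigoplus_i\mathbb{Q}e_i$ with semisimple action on each block; so $\gamma^r$, hence $\gamma$, is semisimple on $H^2$ and thus on all of $H^*(Y,\mathbb{Q})$.

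The main obstacle is the characterization step of the second paragraph: recovering the individual exceptional classes $e_i$ from the ring structure through the jump $\dim K(\eta)=4$, together with the transversality $K_i\cap K_j=0$. This is exactly where the two geometric inputs of the construction are used — the disjointness of the four centers and the maximal ($S_4$) Galois hypothesis on $\phi$ — the latter reappearing, in its full strength, only in the commutant computation that produces $A\in\mathbb{Q}[\psi^*]$.
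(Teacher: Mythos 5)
Your proposal is correct and follows essentially the same route as the paper's proof (Lemmas \ref{lepourendosurhdg} and \ref{profinaloupresque}): the exceptional classes (your $e_i$, the paper's $d_i$) are singled out ring-theoretically by the jump in $\dim\ker\bigl(\cup\,e_i\colon H^1\to H^3\bigr)$, a power $\gamma^r$ fixes them, and the four kernels $K_i$ force $\gamma^r|_{H^1}=\mathrm{diag}(\psi_1,\psi_1)$ with $\psi_1$ commuting with ${}^t\psi$, hence $\psi_1\in\mathbb{Q}[{}^t\psi]$ and semisimple. The only divergence is your characterization of $\mathbb{Q}h$ as $N=\{\eta:\eta^2=0,\ \eta\cup e_i=0\ \forall i\}$: the paper instead uses that $h$ is, up to scalar, the unique square-zero class \emph{outside} $\bigwedge^2H^1(Y,\mathbb{Q})$, which suffices because $\gamma$ automatically preserves $\bigwedge^2H^1$, and thereby avoids your ``delicate point'' (that no nonzero $\omega\in\bigwedge^2H^1$ with $\omega^2=0$ restricts to zero on all four centers) --- that claim is in fact true and does reduce, as you say, to the absence of a rational eigenvector of $\psi$, but you leave it unverified and the paper's formulation makes it unnecessary.
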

\begin{proof}    As the cohomology algebra
 $H^*(Y,\mathbb{Q})$ is generated in degree $\leq 2$, it suffices to prove that the action of an automorphism $\gamma$ of $H^*(Y,\mathbb{Q})$  is semi-simple on $H^1(Y,\mathbb{Q})$ and $H^2(Y,\mathbb{Q})$. We have  $H^2(Y,\mathbb{Q})={\rm Hdg}^2(Y)\oplus \bigwedge^2H^1(Y,\mathbb{Q})$, where the space ${\rm Hdg}^2(Y)$ of Hodge classes on $Y$ is generated by the class $h$ which is the pull-back to $Y$ of the class ${\rm pr}_1^*c_1(\mathcal{O}_{\mathbb{P}^1}(1))$ on $W$ and the classes $d_i$ of the four exceptional divisors of $Y$ over $W$. Clearly the subspace
$\bigwedge^2H^1(Y,\mathbb{Q})\subset H^2(Y,\mathbb{Q})$ is preserved by any automorphism of  the cohomology  algebra.
We first prove
\begin{lemm}\label{lepourendosurhdg}  Any endomorphism $\gamma$  of   the cohomology algebra $H^*(Y,\mathbb{Q})$ preserves the subspace
 $${\rm Hdg}^2(Y)\subset H^2(Y,\mathbb{Q})$$ and  acts in a semi-simple  way on it.
\end{lemm}
\begin{proof}
 The subspace
$\mathbb{Q}h$ is  preserved by $\gamma$ because, up to a coefficient, $h$ is the only  class in $H^2(Y,\mathbb{Q})$ which is not in $\bigwedge^2H^1(Y,\mathbb{Q})$ and satisfies $h^2=0$.  Next we claim that   the $\mathbb{Q}$-vector  subspace $\langle d_i\rangle\subset {\rm Hdg}^2(Y)$ is stable under $\gamma$.  This follows indeed from the structure of the cohomology algebra: Looking at the cup-product map
$$\alpha\cup: H^1(Y,\mathbb{Q})\rightarrow H^3(Y,\mathbb{Q})$$ for $\alpha\in   H^2(Y,\mathbb{Q})$, we see that the $d_i$'s are up to a scalar multiple  the only classes not in  $\bigwedge^2H^1(Y,\mathbb{Q})$ for which the cup product map
$d_i\cup: H^1(Y,\mathbb{Q})\rightarrow H^3(Y,\mathbb{Q})$ has a kernel of dimension $\geq 4$. Hence $\gamma$ preserves the subspace $\langle d_i\rangle\subset H^2(Y,\mathbb{Q})$ and thus  preserves ${\rm Hdg}^2(Y)$.
Finally,  exactly  for  the same reason,  $\gamma$ acts projectively  on the space $\mathbb{P}(\langle d_i\rangle)$  by permuting the $d_i$'s, so that a power of $\gamma$  acts  on the projective space $\mathbb{P}(\langle d_i\rangle)$  fixing the $d_i$'s.
We thus proved  that  the  action of a power of  $\gamma$ on  the vector   space ${\rm Hdg}^2(Y)$ is diagonalizable, which implies  semisimplicity of the action of $\gamma$ on ${\rm Hdg}^2(Y)$, since $\gamma$ is an automorphism.
\end{proof}

For the action of $\gamma$ on degree $1$ cohomology, we  next  prove
\begin{lemm}\label{profinaloupresque}   Any automorphism
$\gamma$ of the cohomology algebra  $ H^*(Y,\mathbb{Q})$ has a power $\gamma^r,\,r>0$, acting  on $H^1(Y,\mathbb{Q})=H^1(T,\mathbb{Q})^2$ via a matrix  \begin{eqnarray}\label{eqmatrix} \begin{pmatrix}
 \psi_1& 0
 \\
0&\psi_1
\end{pmatrix},
\end{eqnarray}  where $\psi_1\in \mathbb{Q}[^t\psi]$.
\end{lemm}

\begin{proof} The proof of Lemma \ref{lepourendosurhdg}   shows that $\gamma$ preserves the class $h$ and the $\mathbb{Q}$-vector space $K:=\langle d_i\rangle$. We also  saw that a power  $\gamma^r$ for some $r>0$ acts  on  $\mathbb{P}(K)$ fixing the $d_i$. It then follows that
the action of $\gamma^r $ on $H^1(Y,\mathbb{Q})$ preserves the four subspaces
$L_i:={\rm Ker}\,(d_i\cup:  H^1(Y,\mathbb{Q})\rightarrow H^3(Y,\mathbb{Q}))$. We have
$$H^1(Y,\mathbb{Q})=L_1\oplus L_2,$$
and this decomposition is preserved by $\gamma^r$. Furthermore
 the subspace $L_3\subset L_1\oplus L_2$ defines an isomorphism $L_1\cong L_2$, so we can see
 $L_4\subset  L_1\oplus L_2\cong L_1\oplus L_1$ as the graph of an automorphism $\psi'$ of $L_1$, that is easily seen to be conjugate to $^t\psi$. As $\gamma^r$ preserves the $L_i$, it follows from the above analysis  that $\gamma^r $ acts on $L_1$ and $L_2$ via the same  automorphism $\psi_1$, that has the property that $\begin{pmatrix}
 \psi_1& 0
 \\
0&\psi_1
\end{pmatrix}$  maps the graph of $^t\psi$ to itself. Equivalently, $\psi_1$  commutes with $^t\psi$. As $\psi$ has only simple  eigenvalues, it follows that $\psi_1\in \mathbb{Q}[^t\psi]$.
\end{proof}
As $\psi_1$ is semisimple, Lemma \ref{profinaloupresque} implies that  the action of a power $\gamma^r$, $r>0$ of  $\gamma$ on $H^1(Y,\mathbb{Q})$ is semisimple, and it follows  that  the action of $\gamma^r$ on $H^1(Y,\mathbb{Q})$ and  $\bigwedge^2H^1(Y,\mathbb{Q})$ is semisimple. Using Lemma \ref{lepourendosurhdg}, we conclude that the action of $\gamma^r$ on $H^2(Y,\mathbb{Q})$ is semisimple.
This concludes the proof of Proposition \ref{proajoute}.
\end{proof}
To conclude this section, we note for future use
that  the compact  K\"ahler   manifolds $X$ and  $Y$ constructed above satisfy the following property  (see \cite{voisin}, \cite{voisinjdg}).
\begin{theo} \label{prorigid}
 For any Hodge structure on   the cohomology algebra $H^*(X,\mathbb{Q})$, resp.  $H^*(Y,\mathbb{Q})$, the induced weight $1$ Hodge structure on $H^1(X,\mathbb{Q})$, resp. $H^1(Y,\mathbb{Q})$  is not polarizable.
\end{theo}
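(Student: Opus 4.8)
The plan is to reduce the statement, uniformly for $X$ and $Y$, to a single arithmetic obstruction concerning $\psi$, the main work being to show that $\psi$ is forced to act as a morphism of the weight $1$ Hodge structure on $H^1$, whatever Hodge structure on the cohomology algebra we start from.

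First I would encode a Hodge structure on the cohomology algebra as a homomorphism $h\colon \mathbb{C}^*\to \mathrm{Aut}(H^*(\,\cdot\,,\mathbb{R}))$ into the group of $\mathbb{R}$-algebra automorphisms of the total cohomology, where $h(z)$ acts by $z^{p}\bar z^{q}$ on $H^{p,q}$; since the cohomology algebra is generated in degree $\le 2$, such an $h$ is determined by its action on $H^1$ and $H^2$. As $\mathbb{C}^*$ is connected, the image of $h$ lies in the identity component $G^{\circ}$ of the real algebra-automorphism group $G$. The point is that the characterizations used in Lemma \ref{leautoalgcoh} (for $X$) and in the proof of Proposition \ref{proajoute}, namely Lemmas \ref{lepourendosurhdg} and \ref{profinaloupresque} (for $Y$), are \emph{purely algebraic}: they single out $\mathbb{Q}h$ by $h^2=0$, the classes $H_i$ by $H_i^2=H_il_i$, the classes $d_i$ by the dimension of $\ker(d_i\cup\colon H^1\to H^3)$, and so single out the subspaces $L_i=\ker(d_i\cup)$. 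Hence these conditions are respected by \emph{all} of $G$, not only by automorphisms preserving a Hodge structure. An element of the connected group $G^{\circ}$ cannot realize a nontrivial permutation of the finite sets $\{\mathbb{Q}d_i\}$ or $\{L_i\}$, so $G^{\circ}$ fixes each line $\mathbb{Q}d_i$ and preserves each $L_i$ individually. Applying this to $h(\mathbb{C}^*)\subset G^{\circ}$, each $d_i$ spans a rank $1$ weight $2$ sub-Hodge-structure, hence is a Hodge class of type $(1,1)$, and each $L_i\subset H^1$ is a sub-Hodge-structure, for an arbitrary Hodge structure on the algebra.

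Next I would read off the endomorphism. For $Y$ one has $H^1=L_1\oplus L_2$ as Hodge structures, with $L_3$ the graph of an isomorphism $L_1\cong L_2$ and $L_4$ the graph of $^t\psi\colon L_1\to L_2\cong L_1$; hence $^t\psi$ is an endomorphism of the weight $1$ Hodge structure $L_1\cong H^1(T,\mathbb{Q})$. For $X$ the same mechanism, applied to the decomposition $H^1(X,\mathbb{Q})=H^1(T,\mathbb{Q})\oplus H^1(T,\mathbb{Q})^*$ cut out by the classes $d,\hat d$, together with the Hodge class $H_2$ encoding $\mathcal{L}_\phi$ (the graph of $\psi$ inside $H^1(T)\otimes H^1(\widehat T)$), shows that $\psi$ is an endomorphism of the induced weight $1$ Hodge structure on $H^1(T,\mathbb{Q})$. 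In either case we are reduced to a weight $1$ Hodge structure of dimension $4$ on $H^1(T,\mathbb{Q})$ admitting $\psi$ as an endomorphism, where the characteristic polynomial of $\psi$ is irreducible of degree $4$ because its Galois group is the transitive group $S_4$.

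Finally I would run the positivity obstruction of \cite{voisin}. Suppose, for contradiction, that the induced weight $1$ Hodge structure were polarizable; since a sub-Hodge-structure of a polarizable one is polarizable, it suffices to treat the summand $H^1(T,\mathbb{Q})$. Then its endomorphism algebra carries a positive Rosati involution, whose restriction to the quartic field $F=\mathbb{Q}[\psi]$ is a positive involution; by Albert's classification $F$ would be totally real or a CM field. If $F$ were totally real the involution would be trivial, so $\psi$ would be self-adjoint for the polarization, hence self-adjoint for the positive Hermitian form $i\,Q(\,\cdot\,,\overline{\,\cdot\,})$ on $H^{1,0}$; its eigenvalues would all be real, forcing each eigenline to be conjugation-stable and thus $\overline{H^{1,0}}=H^{1,0}=H^{0,1}$, which is absurd. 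If $F$ were CM, its Galois closure would have Galois group contained in the centralizer $C_2\wr C_2$ of order $8$ of a fixed-point-free involution in $S_4$, contradicting that the Galois group is all of $S_4$. Either way we reach a contradiction, so the induced weight $1$ Hodge structure is not polarizable. The main obstacle is the middle step: using the connectedness of the Deligne circle together with the rigid \emph{algebraic} description of $G$ to force $\psi$ to respect every Hodge structure on the algebra; once $\psi$ is a Hodge endomorphism, the arithmetic conclusion is exactly the obstruction already exploited in \cite{voisin}.
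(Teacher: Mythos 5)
Your first two paragraphs are essentially the argument the paper relies on: Theorem \ref{prorigid} is proved by citation to \cite{voisin}, and the mechanism there is exactly what you describe — view the Hodge structure as a morphism $h\colon\mathbb{C}^*\to G^\circ$ into the identity component of the algebra-automorphism group, use the purely algebra-theoretic characterizations of the distinguished classes and connectedness (rather than passing to powers, as the paper's Lemmas \ref{leautoalgcoh} and \ref{lepourendosurhdg} do for automorphisms) to force each $d_i$ to be of type $(1,1)$ and each $L_i$ to be a sub-Hodge structure, and then read off $\psi$ (resp. $^t\psi$) as an endomorphism of the induced weight~$1$ Hodge structure. One caveat: for the connectedness argument you need the distinguished classes to be isolated solutions of the algebraic conditions in $\mathbb{P}(H^2_{\mathbb{C}})$, not merely unique among \emph{rational} classes; for $X$ the paper's Lemma \ref{leautoalgcoh} only characterizes $h,H_1,H_2,d,\hat d$ inside $\mathbb{P}({\rm Hdg}^2(X))$ of the given Hodge structure, so you are implicitly redoing that computation over $\mathbb{C}$. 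This is true, but it is an extra verification, not something you can quote.

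The genuine gap is in your last step. You assert that the Rosati involution ``restricts to a positive involution of $F=\mathbb{Q}[\psi]$''. The Rosati involution $x\mapsto Q^{-1}\,{}^t x\,Q$ preserves $E={\rm End}_{\rm HS}(H^1(T,\mathbb{Q}))$ but has no reason to preserve the subfield $F$, and the dichotomy you want from Albert — $F$ totally real or CM — is \emph{false} for quartic fields acting on polarized weight~$1$ Hodge structures of dimension $4$: take $E_0$ an elliptic curve with CM by an imaginary quadratic field $K$ and $F$ a non-CM quadratic extension of $K$; the Serre tensor construction $\mathcal{O}_F\otimes_{\mathcal{O}_K}E_0$ is an abelian surface with $F$-action, and $F$ is neither totally real nor CM. So polarizability alone does not yield your case division; the $S_4$ hypothesis must enter already at this stage. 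The repair: $H^1(T,\mathbb{Q})$ is one-dimensional over $F$, so the centralizer of $F$ in ${\rm End}_{\mathbb{Q}}(H^1(T,\mathbb{Q}))$ is $F$ and $H^1(T,\mathbb{Q})$ is a simple $E$-module, whence $E$ is a simple algebra; the possibilities compatible with a quartic subfield and a faithful $4$-dimensional representation are $E=F$ or $E=M_2(K)$ with $K$ quadratic, and in the latter case $K$, being central in $E$, lies in the centralizer of $F$, i.e. $K\subset F$, giving a quadratic subfield of $F$ — impossible when ${\rm Gal}=S_4$ since $S_3$ is maximal in $S_4$. Only then is $E=F$ Rosati-stable and Albert applicable, after which your two cases go through (note the totally real case needs no polarization at all: simple real eigenvalues make each eigenline conjugation-stable, contradicting $H^{1,0}\cap\overline{H^{1,0}}=0$). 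For comparison, the argument in \cite{voisin} that the paper invokes avoids all of this: choosing a Galois-equivariant eigenbasis $e_\lambda$ of $H^1(T,\mathbb{C})$, the first Hodge--Riemann relation gives $Q(e_{\lambda_1},e_{\lambda_2})=0$ for the two eigenvalues occurring on $H^{1,0}$, and $2$-transitivity of $S_4$ on the eigenvalues then forces $Q(e_\lambda,e_\mu)=0$ for all $\lambda\neq\mu$, hence $Q=0$ — a contradiction requiring neither Rosati involutions nor Albert's classification.
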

The existence of a polarizable Hodge structure on the cohomology algebra  is  the necessary  criterion  used  in \cite{voisin} for a compact K\"ahler manifold  to be homeomorphic to a complex projective manifold. Theorem \ref{prorigid} is proved in \cite{voisin} for the manifold $Y$. The case of $X$ is treated by similar arguments. In both cases, the idea is that the structure of the cohomology algebra forces a Hodge structure on the cohomology algebra to have its degree $1$ part isomorphic to a direct sum $H_1\oplus H_2$ of weight $1$ Hodge structures admitting an endomorphism conjugate to $\psi$ or $^t\psi$, which makes them not polarizable.

\section{Proof of Theorem  \ref{theomain}}
\subsection{Preliminary results}
We start with a general statement concerning proper holomorphic fibrations with one K\"ahler fiber.
\begin{prop} \label{theoeasycons}  Let  $\pi: \mathcal{X}\rightarrow B$ be a proper flat  holomorphic  map, where $B$ is an irreducible  analytic space. Assume one smooth  fiber $\mathcal{X}_t$ is K\"ahler.  Then there exists  a dense open subset $U\subset B$ which is the complement of a closed real analytic subset of $B$, such that the fiber $\mathcal{X}_b$, for $b\in U$, carries a Hodge structure on its  cohomology algebra.
\end{prop}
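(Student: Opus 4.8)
The plan is to produce the relative Hodge filtration as a genuine holomorphic flag of subbundles over a Zariski-dense open set of regular points with smooth fibres, and then to define $U$ as the locus where this flag and its complex conjugate are opposed; the purely holomorphic construction yields a complex-analytic open set, while the opposedness condition is only real-analytic, and this accounts for the real-analytic subset appearing in the statement.

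First I would restrict to the locus $B^{sm}\subset B$ of regular points of $B$ over which $\pi$ is a submersion with smooth fibre. Since $\mathcal{X}_t$ is smooth and $\pi$ is flat, $\pi$ is a smooth morphism in a neighbourhood of $\mathcal{X}_t$, so $B^{sm}$ is nonempty, and it is Zariski-dense and connected because $B$ is irreducible. Moreover, as $\mathcal{X}_t$ is Kähler, all fibres near $t$ are Kähler, so I may fix a point $b_0\in B^{sm}$ whose fibre $\mathcal{X}_{b_0}$ is smooth and Kähler. Over $B^{sm}$ the relative de Rham cohomology $\mathcal{H}^k:=R^k\pi_*(\Omega^\bullet_{\mathcal{X}/B^{sm}})$ is a coherent sheaf carrying the Gauss–Manin connection and the decreasing Hodge filtration $F^p\mathcal{H}^k:=\mathrm{im}\,\big(R^k\pi_*\Omega^{\geq p}_{\mathcal{X}/B^{sm}}\to\mathcal{H}^k\big)$, whose fibre at $b$ is the filtration induced by the Frölicher spectral sequence on $H^k(\mathcal{X}_b,\mathbb{C})$.

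The key step, which I expect to be the main obstacle, is to turn the $F^p$ into holomorphic subbundles of the correct rank over a dense open set: for a general non-Kähler compact complex manifold the Frölicher spectral sequence need not degenerate at $E_1$ and the Hodge numbers $h^{p,q}(b)=\dim H^q(\mathcal{X}_b,\Omega^p_{\mathcal{X}_b})$ may jump, so this is not automatic. Here the single Kähler fibre is decisive. By the Frölicher inequality $\sum_{p+q=k}h^{p,q}(b)\geq b_k$, with equality precisely when degeneration at $E_1$ holds in degree $k$, together with the upper-semicontinuity of each $h^{p,q}$ and the local constancy of the Betti numbers $b_k$ on the connected set $B^{sm}$ (the $R^k\pi_*\mathbb{Q}$ being local systems there), the equality reached at the Kähler point $b_0$ forces $h^{p,q}(b_0)$ to equal the generic (minimal) value of $h^{p,q}$ and forces the generic fibre to be $E_1$-degenerate as well. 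Consequently, on the Zariski-dense open set $B^{gen}\subset B^{sm}$ where every $h^{p,q}$ attains its generic value the spectral sequence degenerates and the sheaves $F^p\mathcal{H}^k$ have locally constant rank; by cohomology and base change they are then holomorphic subbundles, restricting on Kähler fibres to the classical Hodge filtration.

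Finally I would set $B^\circ:=B^{gen}$ and define $U\subset B^\circ$ as the locus where, for all $k$ and $p$, $H^k(\mathcal{X}_b,\mathbb{C})=F^p_b\oplus\overline{F^{k-p+1}_b}$. By Hodge symmetry of the generic Hodge numbers the two summands have complementary dimensions, so this opposedness is the nonvanishing of a determinant assembled from the holomorphic subbundle $F^p$ and its anti-holomorphic conjugate $\overline{F^{k-p+1}}$; its failure locus is therefore a closed real-analytic subset of $B^\circ$, and this is exactly why one only obtains the complement of a real-analytic, not complex-analytic, subset. The set $U$ is nonempty since it contains $b_0$ (where opposedness is classical Hodge theory), hence dense. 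Over $U$ each $(H^k(\mathcal{X}_b,\mathbb{Q}),F^\bullet_b)$ is an effective pure weight-$k$ Hodge structure, effectivity and the vanishing $F^{k+1}H^k=0$ being forced by the ranks, and the product $\Omega^{\geq p}\otimes\Omega^{\geq p'}\to\Omega^{\geq p+p'}$ gives $F^p\cup F^{p'}\subset F^{p+p'}$, so the cup-product maps are morphisms of Hodge structures. Thus every fibre over $U$ carries a Hodge structure on its cohomology algebra in the sense of Definition \ref{defiHScohalg}. The remaining point to check carefully is that the complement $B\setminus U$, the union of the complex-analytic loci removed in passing from $B$ to $B^\circ$ with the real-analytic opposedness-failure locus, is genuinely a closed real-analytic subset of all of $B$.
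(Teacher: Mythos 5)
Your proposal is correct and follows essentially the same route as the paper: define the Hodge filtration via the holomorphic de Rham complex, use the K\"ahler fiber to propagate $E_1$-degeneration of the Fr\"olicher spectral sequence to an analytic-Zariski dense open set (giving locally free filtration subsheaves with base change), and then cut out $U$ by the real-analytic opposedness condition, which is nonempty at the K\"ahler point and hence dense by irreducibility. The only difference is that you spell out the semicontinuity-plus-Fr\"olicher-inequality argument that the paper leaves implicit, and you honestly flag the same minor technical point (realizing $B\setminus U$ as real-analytic in all of $B$) that the paper also glosses over.
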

\begin{proof} For any complex manifold $X$ and for any integer $k$, there is a ``Hodge  filtration" on the cohomology $H^k(X,\mathbb{C})$ defined
as
\begin{eqnarray}  F^rH^k(X,\mathbb{C})={\rm Im}\,(\mathbb{H}^k(X,\Omega_X^{\bullet\geq r})\rightarrow \mathbb{H}^k(X,\Omega_X^{\bullet})).
\label{eqhodgefilt}
\end{eqnarray}
These filtrations for various $k$  are  obviously compatible with the cup-product map in the sense that
$F^rH^k\cup F^s H^l\subset F^{r+s}H^{k+l}$.
In order to prove that there is a Hodge structure on the cohomology algebra of $X$, it thus  suffices to show that these Hodge filtrations define Hodge structures, which is equivalent to saying that they satisfy the opposite condition for all $k,\,r$
\begin{eqnarray}\label{eqopp}
F^rH^k(X,\mathbb{C})\oplus \overline{F^{k-r+1} H^k(X,\mathbb{C})}\cong H^k(X,\mathbb{C}).
\end{eqnarray}
In the situation of the proposition, we know that one fiber is K\"ahler. It follows that its Fr\"olicher spectral sequence degenerates at $E_1$. This implies that the Fr\"{o}licher spectral sequence of $\mathcal{X}_b$ degenerates at $E_1$ for $b$ in a non-empty  open subset $V\subset B$ (contained in the open subset $B^0$ of $B$ parameterizing smooth fibers), which is the complement of a proper closed analytic subset (we will use the terminology ``analytic Zariski open'' in the sequel). As $B$ is irreducible, this open set  is dense.
Next, the degeneracy at $E_1$ of the   Fr\"{o}licher spectral sequence of the fibers $\mathcal{X}_b$ over $V$ implies that, denoting
$\pi_V:\mathcal{X}_V\rightarrow V$ the restriction of $\pi$ to $\mathcal{X}_V:=\pi^{-1}(V)$,
the subsheaves
$$F^r\mathcal{H}^k\subset \mathcal{H}^k,\,\,\mathcal{H}^k=\mathbb{R}^k\pi_{V*}\Omega^\bullet_{\mathcal{X}^0/B^0}=\mathbb{R}^k\pi_{V*}\mathbb{C}\otimes\mathcal{O}_{B^0}$$
are locally free subbundles satisfying the base change property, because the sheaf $\mathcal{H}^k$ is locally free.
It then follows from the constancy of the rank   of $F^rH^k(\mathcal{X}_b,\mathbb{C})$,  $ b\in V$, that the locus $U$  of points $b\in  V$
where the opposite conditions (\ref{eqopp}) are satisfied is the complement of a closed real analytic  subset of $V$.  This locus is not empty by assumption and, as  $B$ is irreducible,   $U$ is dense in $B$.
\end{proof}

\begin{coro}\label{coromono}  In the situation above, assume the compact K\"ahler fiber $\mathcal{X}_t$ has the property that the Hodge structure on its cohomology algebra is rigid under small deformations of $\mathcal{X}_t$. Then, for any integer $k$,  the monodromy action
\begin{eqnarray}\label{eqmonoact} \pi_1(B^0,t)\rightarrow {\rm Aut}\,H^k(\mathcal{X}_t,\mathbb{Q})\end{eqnarray}
acts via automorphisms of Hodge structures.
\end{coro}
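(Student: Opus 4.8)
The plan is to show that the rigidity hypothesis forces the period map of the family to be locally constant near $t$, and then to propagate this by analytic continuation so that the entire monodromy group lands in the stabilizer of the Hodge filtration.

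First I would set up the variation of Hodge structure. Over the locus $B^0$ of smooth fibers one has the local system $\mathbb{V}=R^k\pi_*\mathbb{Q}$, whose monodromy is the map (\ref{eqmonoact}), together with the holomorphic bundle $\mathcal{H}^k=\mathbb{V}\otimes\mathcal{O}$ carrying its Gauss--Manin connection $\nabla$. Exactly as in the proof of Proposition \ref{theoeasycons}, on the analytic Zariski open dense subset $V\subset B^0$ where the Fr\"olicher spectral sequence degenerates, the pieces $F^p\mathcal{H}^k$ of the Hodge filtration are holomorphic subbundles of constant rank. Replacing $B$ by a resolution of singularities if necessary, I may assume that $V$ is a connected complex manifold; since $V$ is the complement in $B^0$ of a proper closed analytic subset, the inclusion induces a surjection $\pi_1(V,t)\twoheadrightarrow\pi_1(B^0,t)$, so it suffices to treat the monodromy of the restricted family over $V$.

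The heart of the argument is the following. For each $p$ the composite $\theta_p\colon F^p\hookrightarrow\mathcal{H}^k\xrightarrow{\nabla}\mathcal{H}^k\otimes\Omega^1_V\to(\mathcal{H}^k/F^p)\otimes\Omega^1_V$ is $\mathcal{O}_V$-linear (the Leibniz term lands in $F^p$), hence a holomorphic section of a holomorphic bundle on $V$; its vanishing means precisely that $F^p$ is flat for $\nabla$, the simultaneous vanishing for all $p$ being the constancy of the period map. The rigidity hypothesis — triviality of the variation of Hodge structure near $t$ — says exactly that all $\theta_p$ vanish on a neighborhood of $t$. Since $V$ is connected and each $\theta_p$ is holomorphic, the identity principle forces $\theta_p\equiv 0$ on all of $V$. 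Thus every $F^p\mathcal{H}^k$ is a flat subbundle of $(\mathcal{H}^k,\nabla)$, and parallel transport — in particular the monodromy around any loop based at $t$ — preserves $F^pH^k(\mathcal{X}_t,\mathbb{C})$. It then remains to pass from the filtration to the Hodge structure: a monodromy operator $\rho(\gamma)$ is defined over $\mathbb{Q}$, hence commutes with complex conjugation on $H^k(\mathcal{X}_t,\mathbb{C})$, so, preserving every $F^p$, it preserves each $\overline{F^q}$ and therefore each $H^{p,q}=F^p\cap\overline{F^q}$; thus it is an automorphism of the Hodge structure. Applying this to $\gamma\in\pi_1(V,t)$ and using the surjectivity onto $\pi_1(B^0,t)$ yields the claim for every element of $\pi_1(B^0,t)$.

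The main obstacle, and the step I would treat most carefully, is the global holomorphic framework: the Hodge filtration is only a genuine subbundle on the Fr\"olicher-degeneration locus $V$, so one must both work over $V$ and control the point-set topology guaranteeing $\pi_1(V,t)\twoheadrightarrow\pi_1(B^0,t)$ when $B$ is a possibly singular analytic space — this is what the reduction to a resolution and the removal of analytic subsets of positive codimension take care of. Once one is on a connected complex manifold, the analytic continuation of the vanishing of $\theta_p$ — the mechanism turning a purely local rigidity statement into a global triviality of the variation — is the conceptual core, and everything else is routine.
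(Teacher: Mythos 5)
Your proof is correct and follows essentially the same route as the paper's: rigidity gives local constancy of the Hodge filtration near $t$, analytic continuation (which you make precise via the $\mathcal{O}$-linear second fundamental forms $\theta_p$) propagates flatness of the $F^p$ over the degeneration locus $V$, and the surjectivity of $\pi_1(V,t)\rightarrow\pi_1(B^0,t)$ together with the rationality of the monodromy operators yields preservation of the Hodge structure. The only difference is one of detail, not of substance: the paper invokes ``analytic continuation'' of the holomorphically varying filtration directly, while you spell out the flatness criterion and the reduction to a smooth connected base.
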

\begin{proof}  The Hodge structure of the cohomology algebra of $\mathcal{X}_t$ being rigid under small deformations of $\mathcal{X}_t$, for any integer $k$,  the Hodge structure  on the degree $k$ cohomology of  the fibers $\mathcal{X}_b$ for $b\in U$ close to $b$ is locally constant. It follows by analytic continuation that the Hodge filtration, which  varies holomorphically and  is defined on the analytic Zariski dense open set $V\subset B^0$, is locally constant on $V$. As the natural map  $\pi_1(V,t)\rightarrow \pi_1(B^0,t)$ is surjective, the monodromy action  (\ref{eqmonoact})  on $H^k(\mathcal{X}_t,\mathbb{Q})$ preserves the Hodge filtration, hence the Hodge structure.
\end{proof}
\begin{rema}\label{remaHSpartout}  {\rm The proof also shows that for any $b\in V$, the  Hodge filtration on $H^k(\mathcal{X}_b,\mathbb{C})$ induces a Hodge structure on $H^k(\mathcal{X}_b,\mathbb{Q})_{}$ which is isomorphic to that of $H^k(\mathcal{X}_t,\mathbb{C})$.}
\end{rema}
\subsection{Projective specializations of compact K\"ahler manifolds \label{secprooftheomain}}
Our goal  in this section is to prove Theorem \ref{theomain}, as a consequence of  the following result on projective specializations of compact K\"ahler manifolds. This is a slight improvement of Theorem \ref{theomainavechypot}.
\begin{theo}\label{theomainavechypot1} Let $X$ be a compact K\"ahler manifold. Assume

(i)  The Hodge structure on the cohomology algebra  $H^*(X,\mathbb{Q})$ is rigid under small deformations of $X$.

(ii) The automorphisms of the Hodge structure on the cohomology algebra $H^*(X,\mathbb{Q})$ preserving a nonzero element  of  $H^2(X,\mathbb{Q})$ with nonzero top self-intersection   are semi-simple.

Then, if  $X$ admits a projective specialization, the cohomology algebra  $H^*(X,\mathbb{Q})$  admits a polarizable Hodge structure.
\end{theo}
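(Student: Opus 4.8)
The plan is to use the projective fiber $\mathcal{X}_0$ to manufacture, via Steenbrink's limiting mixed Hodge structure \cite{steenbrink}, a polarization on a nearby smooth fiber, and then to use hypotheses (i) and (ii) to force this limiting structure to be \emph{pure} — hence an honest polarizable Hodge structure on the cohomology algebra, which by rigidity will also be one for $X$. First I would reduce to a one-parameter degeneration: choosing a general holomorphic disk $\gamma\colon \Delta\to B$ with $\gamma(0)$ the point parametrizing $\mathcal{X}_0$ and $\gamma(\Delta^*)$ contained in the analytic Zariski dense open set $V\subset B^0$ furnished by Proposition \ref{theoeasycons}, and pulling back the family, I obtain a degeneration $\pi\colon \mathcal{X}\to\Delta$ whose punctured fibers are smooth Kähler and whose central fiber $\mathcal{X}_0$ is projective. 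After semistable reduction (a finite base change $t\mapsto t^e$ followed by a resolution) the total space becomes smooth with reduced normal crossings central fiber, all of whose components are projective, so Steenbrink's construction applies and uses only the projectivity of the central fiber. Since $B$ is irreducible, $B^0$ is connected, so by Ehresmann's theorem the chosen nearby fiber $\mathcal{X}_t$ is diffeomorphic to $X$; it thus suffices to produce a polarizable Hodge structure on the cohomology algebra of $\mathcal{X}_t$.

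Next I would analyze the local monodromy $T$ on $H^k(\mathcal{X}_t,\mathbb{Q})$. By the monodromy theorem $T$ is quasi-unipotent, and since $T$ lies in the image of $\pi_1(B^0,t)$, Corollary \ref{coromono} (which uses rigidity (i)) shows that $T$ is an automorphism of the Hodge structure on the cohomology algebra. The crucial point is that $T$ preserves a polarization class. Indeed, a neighborhood of $\mathcal{X}_0$ in the total space retracts onto $\mathcal{X}_0$, so the restriction $H^2(\mathcal{X},\mathbb{Q})\to H^2(\mathcal{X}_0,\mathbb{Q})$ is an isomorphism, and an ample class $\eta$ on $\mathcal{X}_0$ extends to a class $\tilde\omega\in H^2(\mathcal{X},\mathbb{Q})$ on the total space. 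Its restriction $\omega:=\tilde\omega|_{\mathcal{X}_t}$ is the image of a single-valued class, hence a monodromy-invariant section of $R^2\pi_*\mathbb{Q}$, so $T(\omega)=\omega$. Moreover, writing $n=\dim_{\mathbb{C}}X$, flatness of $\pi$ (conservation of degree) gives $\int_{\mathcal{X}_t}\omega^n=\int_{\mathcal{X}_0}\eta^n>0$, because $\eta$ is ample on the $n$-dimensional $\mathcal{X}_0$; thus $\omega$ is a nonzero element of $H^2$ with nonzero top self-intersection.

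Transporting hypothesis (ii) to $\mathcal{X}_t$ through the diffeomorphism with $X$ and the identification of Hodge structures coming from (i), I conclude from $T(\omega)=\omega$ and $\omega^n\neq 0$ that $T$ is semisimple. Being also quasi-unipotent, $T$ then has finite order, so the nilpotent logarithm $N$ of the unipotent part of $T$ vanishes and the monodromy weight filtration is trivial: Steenbrink's limiting mixed Hodge structure on each $H^k$ is \emph{pure} of weight $k$. Because the central fiber is projective, this limiting structure is polarized (the polarization being governed by the invariant class $\omega$ and by $N$); since $N=0$ it is an ordinary polarized pure Hodge structure, and as Steenbrink's construction is multiplicative these structures assemble into a Hodge structure on the cohomology algebra each of whose pieces is polarizable. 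Finally, since the monodromy is finite, pulling the family back along $t\mapsto t^e$ trivializes $T$ and places this limit structure on $H^k(\mathcal{X}_t,\mathbb{Q})$ itself, yielding a polarizable Hodge structure on the cohomology algebra of $\mathcal{X}_t$, hence of $X$.

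The main obstacle I anticipate is the correct invocation of Steenbrink's polarized limit using only the projectivity of a possibly singular central fiber — in particular the semistable reduction, the smoothness of the total space, and the multiplicativity of the limit mixed Hodge structure — together with the verification that the invariant class $\omega$ has nonzero top self-intersection, which is exactly what lets hypothesis (ii) be applied to the monodromy. Once $T$ is shown to be of finite order, the purity of the limit and the transfer of polarizability to $X$ follow formally from the nilpotent orbit theorem and from the diffeomorphism-plus-rigidity identification of Hodge structures.
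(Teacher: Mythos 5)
Your proposal is correct and follows essentially the same route as the paper's own proof: restrict to a disc through the projective fiber, use rigidity (i) via Corollary \ref{coromono} to make the monodromy $T$ an automorphism of the Hodge structure, use the retracted ample class (nonzero top self-intersection) together with (ii) to get semisimplicity of $T$, conclude $T$ is trivial, and deduce purity and polarizability of Steenbrink's limit Hodge structure from $N=0$ and the projectivity of the central fiber. The only cosmetic difference is that the paper invokes Steenbrink's unipotence theorem (Theorem \ref{theounip}) for the semistable family to conclude $T=\mathrm{Id}$ directly, whereas you combine quasi-unipotence with semisimplicity to get finite order and then trivialize $T$ by a further base change; both routes work.
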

Before giving the proof, we prove Theorem \ref{theomain}.
\begin{proof}[Proof of Theorem \ref{theomain}] We observe that  assumptions (i) and (ii) above  are satisfied by the compact K\"ahler manifold $X$ constructed in Section \ref{sec2X}. This is indeed the contents of   Proposition \ref{proajoute1} and Lemma \ref{leautoalgcoh}.  By Theorem \ref{prorigid}, the cohomology algebra of $X$ does not admit  a polarizable Hodge structure. Theorem \ref{theomainavechypot1}  thus shows that $X$ does not admit a projective specialization.
\end{proof}
\begin{proof}[Proof of Theorem \ref{theomainavechypot1}]
Let $X$ be a compact K\"ahler manifold satisfying the assumptions (i) and (ii) above.   Let $\pi:\mathcal{X}\rightarrow B$ be a flat proper holomorphic map between analytic spaces,  where $B$ is irreducible, with one fiber $\mathcal{X}_b$  isomorphic to $X$ and one fiber $\mathcal{X}_0$ projective. We can assume by desingularization that $B$ is smooth with discriminant  divisor $\mathcal{D}=B\setminus B^0$, and we can choose a disc $D\subset B$ centered at $0$  and not contained in $\mathcal{D}$. The analytic Zariski open set $V$ being defined as in the  previous section, we can even assume that $D$ is not contained in $B\setminus  V$. From now on, we will only work with the restricted family
$$\pi':\mathcal{X}':=D\times_B\mathcal{X}\rightarrow D.$$
Note that, after a finite base-change and blow-up over the central fiber, one can assume that
$\mathcal{X}'$ is smooth and  the central fiber $\mathcal{X}'_0$ is a reduced divisor with normal crossings.

For any choice of $s\in D^*\cap V$, one has the monodromy representation
$$\rho:\mathbb{Z}=\pi_1(D^*,s)\rightarrow {\rm Aut}\,H^k(\mathcal{X}_s,\mathbb{Q}),$$
hence a monodromy operator $T=\rho(\gamma)$ where $\gamma$ is a counterclockwise  loop around $0$ based at $s$.  By assumption  (i),  the assumptions of Corollary \ref{coromono} are satisfied by $X$.  Corollary \ref{coromono} and Remark \ref{remaHSpartout} imply that there is a Hodge structure on $H^*(\mathcal{X}_s,\mathbb{Q})$, isomorphic to the  Hodge structure on $H^*(X,\mathbb{Q})$, and $T$ acts on  the cohomology algebra  $H^*(\mathcal{X}_s,\mathbb{Q})$ as an automorphism of Hodge structures, which obviously also preserves the algebra structure.
We use now the fact that the central fiber is projective. There is thus a class $H\in H^2(\mathcal{X}'_0,\mathbb{Q})$ defined as the restriction  of  the hyperplane class in a projective embedding of $\mathcal{X}'_0$.
Choosing  a topological retraction $r$ of $\mathcal{X}'$ onto $\mathcal{X}'_0$, the class $r^*H_{\mid \mathcal{X}'_s}\in H^2(\mathcal{X}'_s,\mathbb{Q})$ is invariant
under $T$. We have $\int_{\mathcal{X}'_0}H^n\not=0$, $n={\rm dim}\,X$,  hence $ \int_{\mathcal{X}'_s}r^*H^n\not=0$. By assumption (ii),   the action of $T$ is thus semi-simple. We now apply the  following result (which in this context  is due to Steenbrink).
\begin{theo} (Steenbrink \cite[Theorem 2.21]{steenbrink}) \label{theounip}  Let $\mathcal{X}$ be    a  complex manifold and  $\mathcal{X}\rightarrow D$  a flat   proper holomorphic map  with smooth fibers $\mathcal{X}_s$ for $s\not=0$ and  reduced normal crossing central fiber. Then  the action of $T$ on $H^k(\mathcal{X}_s,\mathbb{Q})$ is unipotent.
\end{theo}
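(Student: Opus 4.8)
The plan is to prove the unipotence of $T$ through the residue of the Gauss--Manin connection on Deligne's canonical extension, using the reducedness of the normal crossing fibre $\mathcal{X}_0$ in an essential way. First I would restrict to the punctured disc and work with the flat bundle $\mathcal{H}^k=R^k\pi_*\mathbb{C}\otimes_{\mathbb{C}}\mathcal{O}_{D^*}$ equipped with its Gauss--Manin connection $\nabla$, whose monodromy around $0$ is $T$. Deligne's construction produces a canonical extension $\overline{\mathcal{H}}^k$ of $(\mathcal{H}^k,\nabla)$ across $0$ on which $\nabla$ acquires a logarithmic pole with residue $R=\mathrm{Res}_0\nabla$, normalized to have eigenvalues in $[0,1)$. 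The defining relation $T=\exp(-2\pi i R)$ then turns the problem into showing that $R$ is nilpotent: writing $R=R_s+R_n$ for its Jordan decomposition, one has $T_s=\exp(-2\pi i R_s)$, and since the eigenvalues of $R_s$ lie in $[0,1)$, the unipotence of $T$ is equivalent to $R_s=0$, i.e. to the nilpotence of $R$.

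The heart of the argument is to compute $\overline{\mathcal{H}}^k$ and its residue from the geometry of the degeneration. Because $\mathcal{X}$ is smooth and $\mathcal{X}_0=\sum_iE_i$ is a \emph{reduced} normal crossing divisor, the relative logarithmic de Rham complex $\Omega^\bullet_{\mathcal{X}/D}(\log\mathcal{X}_0)$ is well defined, and Steenbrink's comparison identifies $R^k\pi_*\Omega^\bullet_{\mathcal{X}/D}(\log\mathcal{X}_0)$ with the canonical extension $\overline{\mathcal{H}}^k$, the residue $R$ being induced by the operator $\wedge\,\tfrac{dt}{t}$ through the residue exact sequence
\begin{equation*}
0\to\Omega^{\bullet-1}_{\mathcal{X}/D}(\log\mathcal{X}_0)\xrightarrow{\wedge dt/t}\Omega^\bullet_{\mathcal{X}}(\log\mathcal{X}_0)\to\Omega^\bullet_{\mathcal{X}/D}(\log\mathcal{X}_0)\to0.
\end{equation*}
I would then carry out the local computation at a stratum where $m$ branches of $\mathcal{X}_0$ meet: there $t=z_1\cdots z_m$ with all multiplicities equal to $1$, so $\tfrac{dt}{t}=\sum_{i=1}^m\tfrac{dz_i}{z_i}$, and the residue acts on log-forms modulo this class. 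Reducedness is exactly what makes each contribution integral, forcing the eigenvalues of $R$ to be $0$ rather than fractional; the nilpotency order is bounded by the number of branches through a point, hence by $\dim\mathcal{X}_s$. This yields $R^{\,n+1}=0$ with $n=\dim\mathcal{X}_s$, so $R$ is nilpotent, whence $T_s=\exp(-2\pi i R_s)=\mathrm{id}$ and $T=\exp(-2\pi i R_n)$ is unipotent.

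The main obstacle is precisely this second step: the identification of $\overline{\mathcal{H}}^k$ with the pushforward of the relative log complex, together with the verification that the residue coincides, under that identification, with the nilpotent operator coming from the log structure. This is where the full strength of Steenbrink's analysis of the nearby cycles along the normal crossing strata is needed, and where the reducedness hypothesis is indispensable: for a non-reduced central fibre $\sum_im_iE_i$ one only obtains quasi-unipotence, with eigenvalues roots of unity of order dividing $\mathrm{lcm}(m_i)$. An alternative route, which avoids the canonical extension, is to pass to the nearby cycle complex $\psi_f\mathbb{Q}$, for which $H^k(\mathcal{X}_s,\mathbb{Q})\cong\mathbb{H}^k(\mathcal{X}_0,\psi_f\mathbb{Q})$ compatibly with $T$: a direct computation of the local Milnor fibre $\{z_1\cdots z_m=\varepsilon\}\simeq(S^1)^{m-1}$ shows that the local monodromy is unipotent, so the semisimple part $T_s$ acts as the identity on every stalk of $\psi_f\mathbb{Q}$, hence trivially on $\psi_f\mathbb{Q}$ itself and therefore on its hypercohomology, giving the same conclusion.
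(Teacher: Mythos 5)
The paper does not actually prove Theorem \ref{theounip}: it is imported as a black box from Steenbrink \cite[Theorem 2.21]{steenbrink} and used as an input to Theorems \ref{theomainavechypot1} and \ref{theonewtop}. Your first argument is essentially Steenbrink's own proof of the cited result: identify $R^k\pi_*\Omega^\bullet_{\mathcal{X}/D}(\log\mathcal{X}_0)$ with Deligne's canonical extension, read off the residue of the Gauss--Manin connection from the exact sequence involving $\wedge\,dt/t$, and use reducedness to force the residue eigenvalues to be integers, hence $0$ in the normalization with eigenvalues in $[0,1)$, so that the residue is nilpotent and $T=\exp(-2\pi i R)$ is unipotent. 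You also correctly isolate the hard step (the comparison of the log pushforward with the canonical extension) and the exact place where reducedness enters (non-reduced fibres yield only quasi-unipotence).

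One caution about your alternative route via nearby cycles: the inference ``$T_s$ acts as the identity on every stalk of $\psi_f\mathbb{Q}$, hence trivially on $\psi_f\mathbb{Q}$ itself and therefore on its hypercohomology'' is not valid for an endomorphism of an object of the derived category. Indeed, applied to $T$ itself it would ``prove'' $T=\mathrm{Id}$: for a reduced normal crossing fibre, $T$ (not just its semisimple part) acts trivially on every stalk cohomology of $\psi_f\mathbb{Q}$, since the local geometric monodromy of $z_1\cdots z_m$ is a translation of the torus $(S^1)^{m-1}$, homotopic to the identity --- yet $T$ is in general a non-trivial unipotent operator. Triviality on stalks gives triviality of the induced endomorphism of each cohomology \emph{sheaf} $R^q\psi_f\mathbb{Q}$, but not of the complex in the derived category, precisely because of the extension data that produce non-trivial unipotent monodromy. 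The correct (and standard) way to finish is the hypercohomology spectral sequence $E_2^{p,q}=H^p(\mathcal{X}_0,R^q\psi_f\mathbb{Q})\Rightarrow H^{p+q}(\mathcal{X}_s,\mathbb{Q})$: $T$ acts trivially on each $E_2$ term, preserves the induced filtration on the abutment and acts trivially on its graded pieces, hence is unipotent (not trivial) on $H^k(\mathcal{X}_s,\mathbb{Q})$. With that repair, your second route is a clean, self-contained proof that bypasses the canonical extension entirely.
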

It follows that the eigenvalues of $T$ on $H^*(\mathcal{X}_s,\mathbb{Q})$ are equal to $1$, hence as $T$ is semi-simple, it must  be  the identity.

Let $  \widetilde{D^*}$ be the universal cover of $D^*$ and let $ \widetilde{\mathcal{X}'}^*:=\mathcal{X}'\times_D  \widetilde{D^*}$. Let
$H^*_{\rm lim}:=H^*(\widetilde{\mathcal{X}'}^*,\mathbb{Q})$. The cohomology algebra $H^*_{\rm lim}$ is isomorphic to $H^*(X,\mathbb{Q})$.
We  now use  the fact that the central fiber $\mathcal{X}'_0$ is projective. We can then apply  the following  results of Steenbrink  \cite{steenbrink}.
\begin{theo}   (i) There is a mixed Hodge structure on each $H^k_{\rm lim}$.

(ii) More precisely, $H^k_{\rm lim}$ is computed as the abutment of a (weight)  spectral sequence $_WE_1^{p,q}\Rightarrow H^{p+q}_{\rm lim}$, an each
$_WE_1^{p,q}$ carries a Hodge structure. The differentials are morphisms of Hogge structures and the  spectral sequence degenerates at $E_2$.

(iii) Let $N={\rm log}\,T$ (as $Id-T$ is nilpotent, this is a polynomial in $T$ with rational coefficients). Then for each $r\geq 0$, $N^r$ induces an isomorphism
$$N^r: {_WE_1^{-r,q+r}}\cong  {_WE_1^{r,q-r}}.$$
\end{theo}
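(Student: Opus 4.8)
The plan is to realize the limit cohomology $H^k_{\rm lim}$, together with the filtrations entering (i)--(iii), through an explicit complex of sheaves supported on the central fiber $\mathcal{X}'_0$, and then to extract each assertion from the geometry of the strata of $\mathcal{X}'_0$. Write $\mathcal{X}'_0=\bigcup_i Y_i$ for the decomposition into smooth components, and for $p\geq 1$ let $Y^{(p)}$ denote the disjoint union of the $p$-fold intersections $Y_{i_0}\cap\cdots\cap Y_{i_{p-1}}$; the reduced normal crossing hypothesis guarantees that each $Y^{(p)}$ is a smooth projective variety. The central object is the relative logarithmic de Rham complex $\Omega^\bullet_{\mathcal{X}'/D}(\log\mathcal{X}'_0)$, the quotient of $\Omega^\bullet_{\mathcal{X}'}(\log\mathcal{X}'_0)$ by the subcomplex $d\log f\wedge\Omega^{\bullet-1}_{\mathcal{X}'}(\log\mathcal{X}'_0)$, restricted to $\mathcal{X}'_0$; here $f$ denotes the pullback to $\mathcal{X}'$ of the coordinate on $D$. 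Its hypercohomology computes the nearby cycles $R\psi_f\mathbb{C}$, hence $H^\bullet_{\rm lim}\otimes\mathbb{C}$, the rational structure being furnished by $R\psi_f\mathbb{Q}$; and the logarithm $N=\log T$ of the unipotent monodromy is incarnated at the level of complexes by the nilpotent operator $\nu=d\log f\wedge(-)$.

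First I would upgrade this complex to a cohomological mixed Hodge complex in the sense of Deligne, following Steenbrink \cite{steenbrink}: one passes to the double complex $A^{\bullet\bullet}$ whose terms are suitable quotients of $\Omega^\bullet_{\mathcal{X}'}(\log\mathcal{X}'_0)$ by the pole-order filtration, with differentials $d$ and $\nu$, and equips it with the weight filtration $W$ given by the order of logarithmic poles and the Hodge filtration $F$ given by the stupid truncation. The decisive computation is that the $W$-graded pieces of the total complex $A^\bullet$ are, up to shift and Tate twist, the de Rham complexes of the smooth projective strata $Y^{(m)}$. This identifies the $E_1$-page of the weight spectral sequence ${}_WE_1^{p,q}\Rightarrow H^{p+q}_{\rm lim}$ as a direct sum of cohomology groups $H^\bullet(Y^{(m)},\mathbb{Q})$ with Tate twists, each of which carries a pure polarizable Hodge structure; this proves (i) and the first half of (ii). The differential $d_1$ is an alternating sum of Gysin and restriction maps between the various $Y^{(m)}$, which are morphisms of Hodge structures, so the entire $E_1$-page lives in the abelian category of pure Hodge structures.

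The degeneration at $E_2$ asserted in (ii) is then a formal consequence of Deligne's theorem that the weight spectral sequence of any cohomological mixed Hodge complex degenerates at $E_2$: a differential $d_r$ with $r\geq 2$ would be a morphism between pure Hodge structures of different weights and must therefore vanish. I expect the genuine obstacle of the whole argument to lie precisely here, in verifying that the triple $(A^\bullet,W,F)$ really is a mixed Hodge complex --- that is, that the rational structure inherited from $R\psi_f\mathbb{Q}$ is compatible with the analytically defined filtrations $W$ and $F$, and that the graded pieces are pure and polarizable. This is exactly the point where the reduced normal crossing form of the central fiber, together with the smoothness and projectivity of the strata $Y^{(m)}$, becomes indispensable.

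Finally, for (iii) I would compute the action of $N$ on the $E_1$-page directly. Since $N$ is induced by $\nu$, which raises the pole order, it sends ${}_WE_1^{p,q}$ to ${}_WE_1^{p+2,q-2}$, carrying each geometric summand $H^\bullet(Y^{(m)},\mathbb{Q})$ identically (up to Tate twist) onto the matching summand of the target. Matching the index sets of source and target then shows that $N^r\colon {}_WE_1^{-r,q+r}\to {}_WE_1^{r,q-r}$ is a bijection on summands, hence an isomorphism; equivalently, $W$, suitably recentred, is by construction the monodromy weight filtration of $N$, whose defining property is exactly this symmetry. The deeper Hodge-theoretic input --- Hard Lefschetz and the Hodge--Riemann bilinear relations on the projective strata $Y^{(m)}$ --- enters not in the bare isomorphism (iii) but in securing the purity and polarizability used above, and ultimately in the polarization of the limit mixed Hodge structure that the subsequent argument exploits.
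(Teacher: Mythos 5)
This theorem is not proved in the paper at all: it is invoked verbatim as a result of Steenbrink \cite{steenbrink}, and your sketch is a faithful reconstruction of Steenbrink's own argument --- nearby cycles computed by the relative logarithmic de Rham complex, the double complex $A^{\bullet\bullet}$ upgraded to a cohomological mixed Hodge complex with $W$ a (suitably shifted) pole-order filtration, identification of the ${}_WE_1$-terms with Tate-twisted cohomology of the smooth projective strata $Y^{(m)}$ with Gysin/restriction differentials, Deligne's $E_2$-degeneration, and the summand-matching computation of $N^r$ on the $E_1$-page. So your proposal follows essentially the same route as the proof the paper relies on by citation, and it correctly identifies where the real work lies (verifying the mixed Hodge complex axioms, in particular the compatibility of the rational structure coming from $R\psi_f\mathbb{Q}$ with the filtrations $W$ and $F$).
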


In our case, we proved that $T=Id$, so $N=0$. It follows from (iii) that $_WE_1^{r,q}=0$ for $r>0$ or $r<0$. Thus the weight filtration is trivial and the mixed Hodge structure on each $H^k_{\rm lim}$ is pure. Finally, the Hodge structures on  $_WE_1^{0,q}$ are direct sums of Hodge structures on cohomology groups of intersections of components of the normal crossing divisor $\mathcal{X}'_0$, assuming it has global normal crossings. As these successive intersections are projective manifolds,  these Hodge structures are polarized.
We thus proved that each  $H^k_{\rm lim}$ is equipped with a polarizable Hodge structure. Looking at the construction of the Hodge filtration in \cite{steenbrink}, it is clear that these Hodge filtrations are compatible with the cup-product on cohomology, hence we get a polarizable Hodge structure on the cohomology algebra $H^*_{\rm lim}\cong H^*(X,\mathbb{Q})$.
\end{proof}

\section{Topological variant  and further remarks}
\subsection{Proof of Theorems \ref{theotop} and \ref{theonewtop}  \label{secfurther}}
\begin{proof}[Proof of Theorem   \ref{theonewtop}] The proof  uses arguments which already appeared in the previous section. First of all,
we observe that the proof of Theorem \ref{theomain} given in the previous section shows the following:
\begin{prop} \label{progenD}  Let $\pi:\mathcal{X}\rightarrow D$ be a flat proper holomorphic map, where $\mathcal{X}$ is smooth, such that the central fiber is projective and the other fibers are smooth. Let $s\in D^*$ and assume that the monodromy action $T$ on  $H^*(\mathcal{X}_s,\mathbb{Q})$ is trivial. Then the cohomology algebra $H^*(\mathcal{X}_s,\mathbb{Q})$ admits a  polarizable Hodge structure.
\end{prop}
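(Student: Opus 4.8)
The plan is to reuse the machinery assembled in the proof of Theorem \ref{theomainavechypot1}, observing that hypotheses (i) and (ii) there, together with the K\"ahler assumption, served only to \emph{force} the monodromy operator $T$ to equal the identity; in the present statement this is granted directly, so one may enter that argument at exactly the point where $T=Id$ has been established. First I would reduce to the semistable situation: after a finite base change $t\mapsto t^m$ and a blow-up supported over the central fiber, as in the previous section, one may assume that the total space is smooth and that the central fiber is a reduced normal crossing divisor. The crucial observation is that a base change of order $m$ replaces $T$ by $T^m$, so triviality of the monodromy is preserved; the fibers over $D^*$ are unchanged up to reparametrization, whence $H^*(\mathcal{X}_s,\mathbb{Q})$ is unaffected, and the components of the new central fiber together with all their mutual intersections remain projective, because the original central fiber is projective and the modifications are blow-ups with smooth (projective) centers.

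With $T=Id$ one has $N={\rm log}\,T=0$. I would then invoke Steenbrink's results exactly as in the previous section: $H^k_{\rm lim}:=H^k(\widetilde{\mathcal{X}'}^*,\mathbb{Q})$ carries a mixed Hodge structure computed by the weight spectral sequence $_WE_1^{p,q}\Rightarrow H^{p+q}_{\rm lim}$, each $_WE_1^{p,q}$ carries a Hodge structure, and $N^r$ induces isomorphisms $N^r:{_WE_1^{-r,q+r}}\cong {_WE_1^{r,q-r}}$. Since $N=0$, these isomorphisms for $r>0$ force $_WE_1^{r,q}=0$ whenever $r\neq 0$, so the weight filtration is trivial and each $H^k_{\rm lim}$ carries the \emph{pure} Hodge structure sitting on $_WE_1^{0,q}$. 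The latter is a direct sum of Hodge structures on the cohomology of the projective intersections of components of the central fiber, hence is polarizable.

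It remains to transport this pure polarizable Hodge structure onto $H^*(\mathcal{X}_s,\mathbb{Q})$ and to verify compatibility with cup product. The transport is automatic: the family $\widetilde{\mathcal{X}'}^*$ sits over the contractible universal cover $\widetilde{D^*}$ and retracts onto a single fiber, so $H^*_{\rm lim}\cong H^*(\mathcal{X}_s,\mathbb{Q})$ as graded algebras, and the Hodge structures just produced already live on the cohomology algebra of the nearby fiber. As noted at the end of the proof of Theorem \ref{theomainavechypot1}, the Hodge filtrations furnished by Steenbrink's construction are multiplicative, namely $F^rH^k_{\rm lim}\cup F^sH^l_{\rm lim}\subset F^{r+s}H^{k+l}_{\rm lim}$, so they define a Hodge structure on the entire cohomology algebra, polarizable by the previous paragraph. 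The main point, such as it is, is not any single deduction but the step where $N=0$ collapses the weight spectral sequence to yield purity; beyond that, the only bookkeeping to check is that triviality of $T$ survives semistable reduction and that the Steenbrink Hodge filtration is multiplicative, the remainder being a direct transcription of the earlier proof with the step deducing $T=Id$ from semi-simplicity and unipotency simply removed.
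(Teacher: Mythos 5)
Your proposal is correct and takes essentially the same route as the paper: there, Proposition \ref{progenD} is proved precisely by observing that the second half of the proof of Theorem \ref{theomainavechypot1} (semistable reduction, then Steenbrink's weight spectral sequence with $N=\log T=0$ forcing purity, polarizability of the $_WE_1^{0,q}$ terms coming from the projective strata of the central fiber, and multiplicativity of the Hodge filtration) applies verbatim once $T=\mathrm{Id}$ is a hypothesis rather than a consequence of semi-simplicity plus unipotence. Your additional bookkeeping --- that triviality of $T$ survives finite base change and that the components of the semistable central fiber and their intersections remain projective --- only makes explicit what the paper leaves implicit.
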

In the situation of Theorem \ref{theonewtop}, we have  a proper flat  family $\mathcal{X}\rightarrow B$ with smooth fibers homeomorphic to $X$ and  a point $0\in B$ such that the fiber $\mathcal{X}_0$ is projective.  After restriction to a carefully chosen disc in $B$ passing through $0$, finite base change  and semistable reduction, we get
a monodromy operator $T$ acting on $H^*(\mathcal{X}'_t,\mathbb{Q})$ for $t\in D^*$. The action of $T$ obviously preserves the structure of $H^*(X,\mathbb{Q})$ as an algebra, hence it is semi-simple by our main assumption. Steenbrink's theorem \ref{theounip} then says that $T=Id$, so that Proposition \ref{progenD} applies.
\end{proof}
\begin{proof}[Proof of Theorem \ref{theotop}]  According to  Proposition \ref{proajoute}, the compact K\"ahler manifold   $Y$ constructed in Section \ref{sec2X1} satisfies the assumptions of Theorem \ref{theonewtop}. Moreover, by Theorem \ref{prorigid}, there is no polarizable  Hodge structure on the cohomology algebra of $Y$, hence $Y$ does not admit a projective specialization by Theorem \ref{theonewtop}.
\end{proof}

\subsection{Further remarks  on projective specializations \label{secfurther1}}
Consider a proper flat holomorphic map
$$\pi:\mathcal{X}\rightarrow D$$
where $D$ is the disc and $\mathcal{X}$ is smooth. We can assume that only the central fiber is singular, and  by Hironaka resolution and semistable reduction, we can  assume after base change that the central fiber is a reduced normal crossing divisor.
Let $D^*:=D\setminus\{0\}$.
We now prove
\begin{prop} \label{theodegfro} Assume the central fiber $W:=\mathcal{X}_0$ is projective. Then the Fr\"{o}licher spectral sequence of
$\mathcal{X}_t$ degenerates at $E_1$  for $t$ in an analytic  Zariski open set of $D^*$.
\end{prop}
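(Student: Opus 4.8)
The plan is to reduce the statement to a semicontinuity argument and to import the one nontrivial input—$E_1$-degeneration together with base change for a relative logarithmic de Rham complex—from Steenbrink's theory, where the projectivity of the central fiber is exactly what is used. Recall that the Fr\"olicher spectral sequence of a compact complex manifold $Z$ degenerates at $E_1$ if and only if $\sum_{p+q=k}h^{p,q}(Z)=b_k(Z)$ for every $k$, with $h^{p,q}(Z)=\dim H^q(Z,\Omega_Z^p)$, the inequality $\sum_{p+q=k}h^{p,q}(Z)\geq b_k(Z)$ being automatic. Over $D^*$ the map $\pi$ is smooth and proper, so each function $t\mapsto h^{p,q}(\mathcal{X}_t)$ is upper semicontinuous in the analytic Zariski topology (coherence of $R^q\pi_*\Omega^p_{\mathcal{X}/D}$ and semicontinuity), while $b_k(\mathcal{X}_t)$ is locally constant, hence constant on the connected set $D^*$, equal to $\dim H^k_{\rm lim}$. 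Consequently the locus where $\sum_{p+q=k}h^{p,q}(\mathcal{X}_t)$ attains its minimal value is an analytic Zariski open subset of $D^*$, and on it the Fr\"olicher spectral sequence degenerates provided that minimal value equals $b_k$. It therefore suffices to exhibit a single fiber $\mathcal{X}_t$, $t\neq 0$, for which the numerical equality holds. Note that, contrary to the situation of Proposition \ref{theoeasycons}, we are not allowed to assume that any fiber is K\"ahler, so the degeneration on one fiber must be extracted from the projectivity of $\mathcal{X}_0$ alone.

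To do so, I would work with the relative logarithmic de Rham complex $\Omega_{\mathcal{X}/D}^\bullet(\log \mathcal{X}_0)$ attached to the semistable degeneration $\pi$, whose central fiber is a reduced normal crossing divisor. Over $D^*$ this complex agrees with the ordinary relative de Rham complex, so that for $t\neq 0$ the sheaf $R^q\pi_*\Omega^p_{\mathcal{X}/D}(\log \mathcal{X}_0)$ has fiber $H^q(\mathcal{X}_t,\Omega^p_{\mathcal{X}_t})$ and the abutment $\mathbb{R}^k\pi_*\Omega^\bullet_{\mathcal{X}/D}(\log \mathcal{X}_0)$ has fiber $H^k(\mathcal{X}_t,\mathbb{C})$, of rank $b_k$. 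The crucial input is Steenbrink's theorem \cite{steenbrink}: precisely because the central fiber is projective, the logarithmic Hodge--de Rham spectral sequence $E_1^{p,q}=R^q\pi_*\Omega^p_{\mathcal{X}/D}(\log \mathcal{X}_0)\Rightarrow \mathbb{R}^{p+q}\pi_*\Omega^\bullet_{\mathcal{X}/D}(\log \mathcal{X}_0)$ degenerates at $E_1$, and all the sheaves involved are locally free and compatible with base change, including at $0$.

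Granting this, I would conclude by comparing ranks. The $E_1$-degeneration together with local freeness gives, for every point of $D$ and in particular for a generic $t\neq 0$, the equality $\sum_{p+q=k}{\rm rk}\,R^q\pi_*\Omega^p_{\mathcal{X}/D}(\log \mathcal{X}_0)={\rm rk}\,\mathbb{R}^k\pi_*\Omega^\bullet_{\mathcal{X}/D}(\log \mathcal{X}_0)$. Evaluating both sides at such a $t$, where the log complex is the usual de Rham complex, the left-hand side is $\sum_{p+q=k}h^{p,q}(\mathcal{X}_t)$ and the right-hand side is $b_k$. This is exactly the numerical equality required in the first paragraph, so the Fr\"olicher spectral sequence of $\mathcal{X}_t$ degenerates at $E_1$ for $t$ in an analytic Zariski open subset of $D^*$. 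The main obstacle is the input of the second paragraph: establishing $E_1$-degeneration and base change for the relative logarithmic de Rham complex. This is where the projectivity of $\mathcal{X}_0$ is genuinely used, and it is the reason a merely K\"ahler hypothesis on the central fiber would not suffice for the present argument.
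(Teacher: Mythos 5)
Your overall architecture --- the relative logarithmic de Rham complex, Steenbrink's theory as the source of degeneration coming from the projective central fiber, and semicontinuity to spread degeneration to an analytic Zariski open set --- is the same as the paper's, and your first paragraph (reduction to exhibiting a single fiber $t\neq 0$ with $\sum_{p+q=k}h^{p,q}(\mathcal{X}_t)=b_k$ for all $k$) is correct. The gap is in your second paragraph, which is where all the content lies. The statement you quote as ``Steenbrink's theorem'' --- that the relative spectral sequence $E_1^{p,q}=R^q\pi_*\Omega^p_{\mathcal{X}/D}({\rm log}\,W)\Rightarrow \mathbb{R}^{p+q}\pi_*\Omega^\bullet_{\mathcal{X}/D}({\rm log}\,W)$ degenerates at $E_1$ over $D$, with all sheaves involved locally free and compatible with base change, including at $0$ --- is not a quotable theorem of \cite{steenbrink} under the hypothesis that only the central fiber is projective. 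Steenbrink proves relative statements of this kind for projective families, in which case every fiber is projective and Proposition \ref{theodegfro} would be vacuous. What can actually be cited here splits into two weaker facts: (a) \cite[Theorem 2.18]{steenbrink}: the abutment $\mathbb{R}^k\pi_*\Omega^\bullet_{\mathcal{X}/D}({\rm log}\,W)$ is locally free and satisfies base change, with no projectivity hypothesis at all; and (b) \cite[Corollary 4.20]{steenbrink}: since $W$ is projective, the spectral sequence of the \emph{restricted} complex $(\Omega^\bullet_{\mathcal{X}/D}({\rm log}\,W))_{\mid W}$, computing hypercohomology on the central fiber only, degenerates at $E_1$. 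The relative statement you invoke is a consequence of (a) and (b), but deriving it is precisely the missing step, and it is the heart of the matter: your own semicontinuity argument over $D^*$ cannot substitute for it, because it takes as input the generic-fiber equality that this derivation is supposed to produce.

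The derivation that fills the gap runs as follows. By (b) and base change from (a), $\sum_{p+q=k}\dim H^q\bigl(W,(\Omega^p_{\mathcal{X}/D}({\rm log}\,W))_{\mid W}\bigr)=\dim\mathbb{H}^k\bigl(W,(\Omega^\bullet_{\mathcal{X}/D}({\rm log}\,W))_{\mid W}\bigr)={\rm rk}\,\mathbb{R}^k\pi_*\Omega^\bullet_{\mathcal{X}/D}({\rm log}\,W)=b_k$, the last equality because over $D^*$ the log complex restricts to the usual de Rham complex of the fibers. By Grauert upper semicontinuity applied at $0$ to the locally free sheaves $\Omega^p_{\mathcal{X}/D}({\rm log}\,W)$, the locus where $\dim H^q(\mathcal{X}_t,\Omega^p_{\mathcal{X}/D}({\rm log}\,W)_{\mid \mathcal{X}_t})$ exceeds its value at $0$ is a closed analytic subset of $D$ avoiding $0$; outside this locus, and for $t\neq0$, one gets $\sum_{p+q=k}h^{p,q}(\mathcal{X}_t)\leq b_k$, while the Fr\"olicher inequality gives the reverse bound, hence equality and degeneration. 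Note that this argument directly produces degeneration on an analytic Zariski neighborhood of $0$, so once it is in place your first paragraph becomes redundant (though not incorrect). In short: your proof becomes correct, and essentially identical to the paper's, once the black-box citation of the second paragraph is replaced by (a), (b) and this semicontinuity comparison at $t=0$; as written, it assumes at the crucial point exactly what has to be proven.
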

\begin{proof}  The Fr\"{o}licher spectral sequence has a relative version over $D$, using the relative   logarithmic holomorphic de Rham complex $\Omega^\bullet_{\mathcal{X}/D}({\rm log}\,W)$. According to Steenbrink (see \cite[Theorem 2.18]{steenbrink}), the coherent sheaf
$\mathbb{R}^k\pi_*\Omega^\bullet_{\mathcal{X}/D}({\rm log}\,W)$ on $D$ is locally free and satisfies base change for any $k$.
There is a relative Fr\"{o}licher spectral sequence
\begin{eqnarray}\label{eqrelss} E_1^{p,q}=R^q\pi_*\Omega^p_{\mathcal{X}/D}({\rm log}\,W)\Rightarrow  \mathbb{R}^k\pi_*\Omega^\bullet_{\mathcal{X}/D}({\rm log}\,W).
\end{eqnarray}
The relative logarithmic holomorphic de Rham complex restricts on each fiber $\mathcal{X}_t$, $t\not=0$, to the holomorphic de Rham complex
of $\mathcal{X}_t$.  Moreover, when $W$ is projective,  the Fr\"{o}licher spectral sequence of the restricted complex $(\Omega^\bullet_{\mathcal{X}/D}({\rm log}\,W))_{\mid W}$ degenerates at $E_1$ by \cite[Corollary 4.20]{steenbrink}. Upper semicontinuity and local freeness of $\mathbb{R}^k\pi_*\Omega^\bullet_{\mathcal{X}/D}({\rm log}\,W)$ then imply by standard arguments that the Fr\"{o}licher spectral sequence (\ref{eqrelss}) degenerates at $E_1$ for $t$ in an analytic Zariski open neighborhood of $0$ in $D$.
\end{proof}
\begin{rema}\label{remautile}  {\rm  We also deduce, using  the same arguments as above,  that the sheaves $R^q\pi_*\Omega_{\mathcal{X}/D}^p$ are locally free and satisfy base-change  in an analytic-Zariski open neighborhood of $0$ in $D$, and that the subsheaves
$$F^r\mathbb{R}^k\pi_*\Omega^{\bullet\geq r}_{\mathcal{X}/D}({\rm log}\,W):={\rm Im}\,(\mathbb{R}^k\pi_*\Omega^{\bullet\geq r}_{\mathcal{X}/D}({\rm log}\,W)\rightarrow \mathbb{R}^k\pi_*\Omega^\bullet_{\mathcal{X}/D}({\rm log}\,W))$$
are locally free subsheaves and satisfy base change.}
\end{rema}
\begin{rema}{\rm  Proposition \ref{theodegfro} has some similarities with Proposition 1 of \cite{kerrlazasaito}. I thank the referee for mentioning this reference. }
\end{rema}
We do not know the answer to the following questions.  Assume as before  the central fiber $Y:=\mathcal{X}_0$ of a flat proper holomorphic map $\mathcal{X}\rightarrow D$, with $\mathcal{X}$ smooth,  is projective.

\begin{question} Does the Hodge filtration defined above on the cohomology of the fibers $\mathcal{X}_t$ define  Hodge structures for $t$ close to $0$?
\end{question}
Assuming the answer to this question is yes, the following question remains
\begin{question}  Are the Hodge structures  on  the cohomology of the fibers  $\mathcal{X}_t$ for $t$ close to $0$  polarizable with real coefficients?
\end{question}
By this we mean precisely the following: Does there exist a class $\alpha_t\in H^{1,1}(\mathcal{X}_t)_\mathbb{R}$ satisfying the hard Lefschetz property, so that the associated Lefschetz intersection pairing satisfies the Hodge-Riemann bilinear relations.
\begin{question}  Does  the fiber  $\mathcal{X}_t$  belong to the Fujiki class $\mathcal{C}$? Are there examples where some or all  fibers
$\mathcal{X}_t$ are not K\"ahler for $t$ arbitrarily close to $0$?
\end{question}

CNRS, Institut de Math\'{e}matiques de Jussieu-Paris rive gauche

claire.voisin@imj-prg.fr
    \end{document}